\documentclass[a4paper,11pt]{amsart}

\usepackage{mathrsfs}
\usepackage{cite}
\usepackage{amssymb}
\usepackage{graphicx}

\DeclareMathAlphabet{\mathpzc}{OT1}{pzc}{m}{it}

\newtheorem{theorem}{Theorem}[section]
\newtheorem{lemma}[theorem]{Lemma}

\newtheorem{corollary}[theorem]{Corollary}

\newtheorem{proposition}[theorem]{Proposition}
\theoremstyle{definition}
\newtheorem{definition}[theorem]{Definition}

\theoremstyle{remark}
\newtheorem{remark}[theorem]{Remark}

\newtheorem{claim}[theorem]{Claim}

\numberwithin{equation}{section}

\allowdisplaybreaks[4]

\begin{document}

\title{Geometry of non-degenerate locally tame non-isolated singularities} 

\author{Christophe Eyral and Mutsuo Oka}
\address{C. Eyral, Institute of Mathematics, Polish Academy of Sciences, \'Sniadeckich~8, 00-656 Warsaw, Poland}
\email{ch.eyral@impan.pl}
\address{M. Oka, Department of Mathematics, Tokyo University of Science, 1-3 Kagurazaka, Shinjuku-ku, Tokyo 162-8601, Japan}   
\email{oka@rs.tus.ac.jp}

\subjclass[2010]{14J70, 14J17, 32S15, 32S25.}

\keywords{Non-isolated hypersurface singularity; Whitney equisingularity; Topological equisingularity; Whitney stratification; Thom condition; Milnor fibration; Newton non-degeneracy; Uniform local tameness; Complete intersection variety.}

\begin{abstract}
We give a criterion to test geometric properties such as Whitney equisingularity and Thom's $a_f$ condition for new families of (possibly non-isolated) hypersurface singularities that ``behave well'' with respect to their Newton diagrams. As an important corollary, we obtain that in such families all members have isomorphic Milnor fibrations. 
\end{abstract}

\maketitle

\markboth{C. Eyral and M. Oka}{Geometry of non-degenerate locally tame non-isolated singularities} 

\section{Introduction}\label{intro}

Let $f(t,\mathbf{z})=f(t,z_1,\ldots, z_n)$ be a non-constant polynomial function on $\mathbb{C}\times \mathbb{C}^n$ such that $f(t,\mathbf{0})=0$ for all small $t$. As usual, we write $f_t(\mathbf{z}):=f(t,\mathbf{z})$ and we denote by $V(f_t)$ the zero set of $f_t$, which defines a hypersurface in $\mathbb{C}^n$. The goal of this paper is to investigate the local geometry, at singular points, of the hypersurfaces $V(f_t)$ as the parameter $t$ varies. Here, the word ``local'' always refers to properties associated to (small representatives of) germs at the origin. Our main result is a criterion to test Whitney equisingularity and Thom's $a_f$ condition for new families of (possibly non-isolated) hypersurface singularities that ``behave well'' with respect to their Newton diagrams.

To our knowledge, the first result in this direction concerns \emph{isolated} singularities and was obtained by Brian\c con in an unpublished lecture notes \cite{B}. It asserts that, for isolated singularities, the family $\{V(f_t)\}_t$ is Whitney equisingular if the Newton boundary of $f_t$ is independent of $t$ and $f_t$ is non-degenerate (in the sense of Kouchnirenko \cite{K}) for all small $t$. Here, the expression ``Whitney equisingular'' means that there exists a Whitney $(b)$-regular stratification (in the sense of \cite{GWPL})\footnote{In particular, we do not require the frontier condition to be satisfied. However, note that taking the connected components of a Whitney $(b)$-regular stratification gives a new such a stratification for which the frontier condition holds (see \cite{GWPL}).} of the  hypersurface $V(f):=f^{-1}(0)$ (which lies in $\mathbb{C}\times \mathbb{C}^n$) such that the $t$-axis $\mathbb{C}\times \{\mathbf{0}\}$ is a stratum. For families of isolated singularities, this simply means that $V(f)\setminus (\mathbb{C}\times \{\mathbf{0}\})$ is smooth and Whitney $(b)$-regular over $\mathbb{C}\times \{\mathbf{0}\}$. Note that Whitney equisingularity is quite a strong form of equisingularity. In particular, combined with the Thom--Mather first isotopy theorem \cite{T,Ma}, it implies topological equisingularity, that is, the local, ambient, topological type of $V(f_t)$ at $\mathbf{0}$ is independent of $t$ for all small $t$.

For non-isolated singularities, Whitney equisingularity is more delicate. In particular, for such singularities, the smooth part of $V(f)$ may be Whitney $(b)$-regular over the $t$-axis without the family $\{V(f_t)\}_t$ being Whitney equisingular or even without being topologically equisingular. For instance, look at Zariski's example $f(t,z_1,z_2)=t^2z_1^2-z_2^2$ for which the smooth part is actually Whitney $(b)$-regular over $\mathbb{C}\times \{\mathbf{0}\}$, while the local, ambient, topological type of $V(f_t)$ at $\mathbf{0}$, for $t\not=0$, is different from that of $V(f_0)$ (see \cite{Z}).

In \cite[Theorem 3.8]{EO}, we gave a first generalization of Brian\c con's theorem for families of \emph{non-isolated} singularities. Unlike the isolated case, for non-isolated singularities, we should take into account not only the compact faces of the Newton polyhedron (via non-degeneracy) but also the non-compact faces. This led us to the notion of ``uniform local tameness'' which is a kind of non-degeneracy condition, uniform in $t$, on the non-compact faces of the Newton polyhedron (see below). Roughly, our theorem says that if $\{f_t\}_t$ is a family of non-degenerate functions with constant Newton boundary and satisfying the uniform local tameness condition, then the corresponding family of hypersurfaces $\{V(f_t)\}_t$ is Whitney equisingular.

Unfortunately this theorem has some restriction. Indeed, when $f_t(\mathbf{z})$ is the product of two functions  $g_t(\mathbf{z})$ and  $h_t(\mathbf{z})$ such that the dimension at $\mathbf{0}$ of the intersection $V(g_t)\cap V(h_t)$\textemdash which is contained in the singular locus of $V(f_t)$\textemdash is greater than or equal to $1$, then $f_t$ is never non-degenerate if its Newton diagram intersects each coordinate axes (see \cite{K}), and hence Theorem 3.8 of \cite{EO} does not apply to this case. It was therefore very desirable to find a new criterion that includes the (much larger) class of functions of the form $f_t(\mathbf{z})=f_t^1(\mathbf{z})\cdots f_t^{k_0}(\mathbf{z})$. That is the main, but not unique, goal of the present paper. 

More precisely, we shall introduce the class of ``Newton-admissible'' families, which are defined as follows. A family
 $\{f_t\}_t$ is Newton-admissible if $f_t(\mathbf{z})$ is the product of non-degenerate functions $f_t^1(\mathbf{z}),\ldots,f_t^{k_0}(\mathbf{z})$ with constant Newton boundary (with respect to $t$) and such that any subfamily 
\begin{equation*}
\{V(f_t^{k_1},\ldots,f_t^{k_p})\}_t:=\bigg\{ \bigcap_{j=1}^p V(f_t^{k_j})\bigg\}_t, 
\end{equation*}
with $\{k_1,\ldots,k_p\}\subseteq\{1,\ldots,k_0\}$ is a family of non-degenerate complete intersection varieties satisfiying the ``uniform local tameness'' condition. 
Essentially, the latter condition means the following. First, for a fixed $t$, the local tameness condition guarantees  that for any subset $I\subseteq \{1,\ldots,n\}$ with
\begin{equation*}
f^{k_1}\vert_{\mathbb{C}^I}\equiv\cdots\equiv f^{k_p}\vert_{\mathbb{C}^I}\equiv 0,
\end{equation*}
any weight vector $\mathbf{w}=(w_1,\ldots,w_n)\in \mathbb{N}\setminus \{\mathbf{0}\}$ whose zero weights $w_i=0$ are located at the places $i\in I$, and any non-zero small enough complex numbers $u_i$, $i\in I$ (say, $\sum_{i\in I}\vert u_i\vert^2< R_t$ for some $R_t>0$), the $k_p$-form 
\begin{equation*}
df^{k_1}_{t,\mathbf{w}}\wedge \cdots\wedge df^{k_p}_{t,\mathbf{w}}
\end{equation*}
is nowhere vanishing  in the toric variety 
\begin{equation*}
V(f^{k_1}_{t,\mathbf{w}},\ldots,f^{k_p}_{t,\mathbf{w}})\cap \{\mathbf{z}\in\mathbb{C}^{*n}\mid (z_i)_{i\in I}=(u_i)_{i\in I}\}.
\end{equation*}
Here, $f^{k}_{t,\mathbf{w}}$ denotes the face function of $f^{k}_{t}$ with respect to the weight vector $\mathbf{w}$. A number $R_t$ for which the above property holds is called a radius of local tameness of the set of functions $\{f^{k_1}_t,\ldots,f^{k_p}_t\}$. Now, when the parameter $t$ varies, we say that the family of varieties $V(f_t^{k_1},\ldots,f_t^{k_p})$ satisfies the ``uniform'' local tameness condition if there are radii of local tameness $R_t$ that can be chosen to be independent of $t$ for all small $t$ (for a precise definition, see Sections \ref{ltndcip} and \ref{sect-we}).
With this terminology, our main result says that if $\{f_t\}_t$ is Newton-admissible, then the family of hypersurfaces $\{V(f_t)\}_t$ is Whitney equisingular; moreover, the corresponding Whitney stratification of $V(f)$ is just the canonical toric stratification (see Theorem \ref{mt}). In the case of a single function (i.e., $k_0=1$), the result reduces to Theorem 3.8 of \cite{EO}.

While the proof of this equisingularity criterion is not easy, its statement is quite simple. Let us emphasize that the assumptions (non-degeneracy and uniform local tameness) are elementary algebraic conditions which can often be checked using a computer. On the other hand, the conclusion (which is geometric in nature) cannot be directly checked by a computer. 

Besides Whitney equisingularity, another important geometric property is so-called Thom's $a_f$ condition. For instance, this condition plays a crucial role in the Thom--Mather second isotopy theorem (see \cite{Ma,T}). It also ensures the transversality of the nearby fibres of (germs at $\mathbf{0}$ of) analytic functions to small spheres, which is a key property to prove the existence of their Milnor fibrations. By a theorem of Parusi\'nski \cite{P} and Brian\c con--Maisonobe--Merle \cite{BMM}, Whitney's $(b)$-regularity implies so-called $w_f$ condition (also called Thom's \emph{strict} condition), which in turn implies Thom's $a_f$ condition. Thus, combined with our equisingularity criterion (Theorem \ref{mt}), it follows that if $\{f_t\}_t$ is a Newton-admissible family, then the canonical toric stratification of $V(f)$ satisfies Thom's $a_f$ condition (see Theorem \ref{mt2}). This theorem can also be proved by a direct explicit calculation using only elementary methods (see Section \ref{sect-proofmt2}). 

Finally, as an important corollary of Theorems \ref{mt} and \ref{mt2}, we shall show that if the family $\{f_t\}_t$ is Newton-admissible, then for $t$ small enough the Milnor fibrations of $f_t$ and $f_0$ at $\mathbf{0}$ are isomorphic (see Theorem \ref{mt4}). A key observation in the proof of this result is Theorem \ref{mt3} which asserts that Newton-admissible families have so-called ``uniform stable radius''. Roughly, a family has a uniform stable radius if there exists a radius $r>0$ such that the nearby fibres $V(f_t-\eta):=f_t^{-1}(\eta)$, $\eta\not=0$, are non-singular in the open ball $\mathring{B}_r$, transversely intersect all spheres $\mathbb{S}_{\varepsilon''}$ with radius $\varepsilon''\in [\varepsilon',\varepsilon]\subseteq (0,r]$, and if this occurs ``uniformly'' with respect to the deformation parameter $t$ (for a precise definition, see Section \ref{sect-MF}). In turn, the proof of Theorem \ref{mt3} is based on a ``uniform'' version of a well-known result according to which any sufficiently small sphere transversely intersects all strata of any given Whitney stratification (see Proposition \ref{lemma-fpofmt3}).

To complete this introduction, note that working with a product of functions $f_t^1(\mathbf{z}),\ldots,f_t^{k_0}(\mathbf{z})$ presents an essential new difficulty. Indeed, to prove our main result on Whitney equisingularity, we need to calculate limits of expressions of the form
\begin{equation*}
df^{k_1}_{t,\mathbf{w}}(\rho(s))\wedge \cdots\wedge df^{k_p}_{t,\mathbf{w}}(\rho(s))
\end{equation*} 
along real analytic curves $\rho(s)$ as $s\to 0$, and in general, the limits of the linearly independent $1$-forms 
\begin{equation*}
df^{k_1}_{t,\mathbf{w}}(\rho(s)), \ldots, df^{k_p}_{t,\mathbf{w}}(\rho(s))
\end{equation*}  
may be linearly dependent. This phenomenon, which does not happen in the case of a single function, is one of the main difficulties.

\section{Non-degenerate locally tame complete intersection variety}\label{ltndcip}

Let $\mathbf{z}:=(z_1,\ldots, z_n)$ be coordinates for $\mathbb{C}^n$, and let $f(\mathbf{z})=\sum_\alpha c_\alpha\, \mathbf{z}^\alpha$ be a non-constant polynomial function which vanishes at the origin. Here, $\alpha:=(\alpha_1,\ldots,\alpha_n)$ is an integer vector, $c_\alpha\in\mathbb{C}$, and $\mathbf{z}^{\alpha}$ is a notation for the monomial $z_1^{\alpha_1}\cdots z_n^{\alpha_n}$. 
For any $I\subseteq\{1,\ldots, n\}$, we denote by $\mathbb{C}^I$ (respectively, $\mathbb{C}^{*I}$) the set of points $(z_1,\ldots, z_n)\in \mathbb{C}^n$ such that $z_i=0$ if $i\notin I$ (respectively, $z_i=0$ if and only if $i\notin I$).
In particular, we have $\mathbb{C}^{\emptyset}=\mathbb{C}^{*\emptyset}=\{\mathbf{0}\}$ and $\mathbb{C}^{*\{1,\ldots,n\}}=\mathbb{C}^{*n}$, where $\mathbb{C}^*:=\mathbb{C}\setminus \{\mathbf{0}\}$. Throughout this paper, we are only interested in a \emph{local} situation, that is, in (arbitrarily small representatives of) germs at the origin.

In this section, we recall the definition of non-degenerate, locally tame, complete intersection varieties already used in \cite{O3}. We start with the special case of hypersurfaces.

\subsection{Non-degenerate locally tame hypersurface}\label{sect-11}

The Newton polyhedron $\Gamma_{\! +}(f)$ of the germ of $f$ at the origin $\mathbf{0}\in \mathbb{C}^n$ (with respect to the coordinates $\mathbf{z}=(z_1,\ldots, z_n)$) is the convex hull in $\mathbb{R}_+^n$ of the set
\begin{equation*}
\bigcup_{c_\alpha\not=0} (\alpha+\mathbb{R}_+^n).
\end{equation*}
The Newton boundary (also called Newton diagram) of $f$ is the union of the compact faces of  $\Gamma_{\! +}(f)$. It is denoted by $\Gamma(f)$.

For any non-zero weight vector $\mathbf{w}:=(w_1,\ldots,w_n)\in\mathbb{N}^n\setminus\{\mathbf{0}\}$, let us denote by $\ell_\mathbf{w}$ the restriction to $\Gamma_{+}(f)$ of the linear map
\begin{equation*}
\mathbf{x}:=(x_1,\ldots, x_n)\in \mathbb{R}^n\mapsto\sum_{i=1}^n w_i x_i \in \mathbb{R}.
\end{equation*}
Let $d(\mathbf{w};f)$ be the minimal value of $\ell_\mathbf{w}$, and let $\Delta(\mathbf{w};f)$ be the face of $\Gamma_{+}(f)$ defined by 
\begin{equation*}
\Delta(\mathbf{w};f)=\{\mathbf{x}\in \Gamma_{+}(f)\mid \ell_\mathbf{w}(\mathbf{x})=d(\mathbf{w};f)\}.
\end{equation*} 
Note that if the $w_i$'s are positive for all $i$, then $\Delta(\mathbf{w};f)$ is a (compact) face of $\Gamma(f)$. Finally, put $I(\mathbf{w}):=\{i \in \{1,\ldots,n\}\mid w_i=0\}$. By definition, the \emph{non-compact Newton boundary} is the union of the usual Newton boundary $\Gamma(f)$ together with the ``essential'' non-compact faces of $\Gamma_+(f)$, that is, the non-compact faces $\Delta(\mathbf{w};f)$ such that the restriction of $f$ to $\mathbb{C}^{I(\mathbf{w})}$ identically vanishes.

The germ at $\mathbf{0}$ of the hypersurface $V(f):=f^{-1}(0)$ is called \emph{(Newton) non-degenerate} if for any positive weight vector $\mathbf{w}$, the toric hypersurface 
\begin{equation*}
V^*(f_{\mathbf{w}}):=\{\mathbf{z}\in\mathbb{C}^{*n}\mid 
f_{\mathbf{w}}(\mathbf{z})=0\}
\end{equation*}
is a reduced non-singular hypersurface in the complex torus $\mathbb{C}^{*n}$. This means that $f_{\mathbf{w}}$ has no critical point in $V^*(f_\mathbf{w})$\textemdash equivalently,  by Euler identity, in $\mathbb{C}^{*n}$\textemdash that is, the $1$-form $df_{\mathbf{w}}$ is nowhere vanishing in $V^*(f_\mathbf{w})$.
Here, we write $f_\mathbf{w}$ for the face function 
\begin{equation*}
\mathbf{z} \mapsto \sum_{\alpha\in \Delta(\mathbf{w};f)} c_\alpha\, \mathbf{z}^\alpha
\end{equation*} 
of $f$ with respect to $\mathbf{w}$.
We emphasize that $V^*(f_{\mathbf{w}})$ is globally defined in~$\mathbb{C}^{*n}$.

Let $\mathcal{V}_f$ be the set of all subsets $I\subseteq \{1,\ldots,n\}$ such that the restriction of $f$ to $\mathbb{C}^I$ identically vanishes.
We say that $\mathbb{C}^I$ is a vanishing (respectively, a non-vanishing) coordinate subspace for $f$ if $I\in\mathcal{V}_f$ (respectively, if $I\notin\mathcal{V}_f$).
For any $u_{i_1},\ldots, u_{i_m}\in\mathbb{C}^*$ ($m\leq n$), let $\mathbb{C}^{*n}(u_{i_1},\ldots, u_{i_m})$ denote the set of points $(z_1,\ldots,z_n)\in\mathbb{C}^{*n}$ such that $z_{i_j}=u_{i_j} \mbox{ for } 1\leq j\leq m$. 
We say that the germ of $V(f)$ at $\mathbf{0}$ is \emph{locally tame} if there is a positive number $R(f)$ such that for any non-empty subset $I:=\{i_1,\ldots, i_m\}\in \mathcal{V}_f$, any non-zero weight vector $\mathbf{w}$ with $I(\mathbf{w})=I$, and any non-zero complex numbers $u_{i_1},\ldots, u_{i_m}$ satisfying the inequality
\begin{equation*}
\sum_{j=1}^m\vert u_{i_j}\vert^2 < R(f), 
\end{equation*}
the toric hypersurface 
\begin{equation*}
V^*(f_{\mathbf{w}})\cap \mathbb{C}^{*n}(u_{i_1},\ldots, u_{i_m})
\end{equation*}
is a reduced non-singular hypersurface in $\mathbb{C}^{*n}(u_{i_1},\ldots, u_{i_m})$. This means that $f_{\mathbf{w}}$ has no critical point in $V^*(f_{\mathbf{w}})\cap \mathbb{C}^{*n}(u_{i_1},\ldots, u_{i_m})$\textemdash equivalently, in $\mathbb{C}^{*n}(u_{i_1},\ldots, u_{i_m})$\textemdash as a function of the $n-m$ variables $z_{i_{m+1}},\ldots,z_{i_n}$, where $\{i_{m+1},\ldots,i_n\}=\{1,\ldots, n\}\setminus \{i_{1},\ldots,i_m\}$. 
In other words, $f_{\mathbf{w}}$ is a non-degenerate function of the variables $(z_i)_{i\notin I}$, with the other variables $(z_i)_{i\in I}$ being fixed in the ball $\sum_{i\in I}\vert z_{i}\vert^2 < R(f)$. Any positive number $R(f)$ for which the above property holds is called a \emph{radius of local tameness} of $f$.

\subsection{Non-degenerate locally tame complete intersection variety}

Let us now consider $k_0$ non-constant polynomial functions $f^1(\mathbf{z}),\ldots, f^{k_0}(\mathbf{z})$ which all vanish at the origin. We say that the germ at $\mathbf{0}$ of the variety 
\begin{equation*}
V(f^1,\ldots,f^{k_0}):=\{\mathbf{z}\in\mathbb{C}^n\mid f^1(\mathbf{z}) = 
\cdots = f^{k_0}(\mathbf{z})=0\}
\end{equation*}
is a germ of a \emph{non-degenerate} complete intersection variety if for any positive weight vector $\mathbf{w}$, the toric variety
\begin{equation*}
V^*(f^1_{\mathbf{w}},\ldots,f^{k_0}_{\mathbf{w}}):=\{\mathbf{z}\in\mathbb{C}^{*n}\mid f^1_{\mathbf{w}}(\mathbf{z}) = \cdots = f^{k_0}_{\mathbf{w}}(\mathbf{z})=0\}
\end{equation*}
is a reduced, non-singular, complete intersection variety in $\mathbb{C}^{*n}$. This means that the $k_0$-form 
\begin{equation*}
df^1_{\mathbf{w}}\wedge \cdots\wedge df^{k_0}_{\mathbf{w}}
\end{equation*}
is nowhere vanishing in  $V^*(f^1_{\mathbf{w}},\ldots,f^{k_0}_{\mathbf{w}})$. Again, let us emphasize that the variety $V^*(f^1_{\mathbf{w}},\ldots,f^{k_0}_{\mathbf{w}})$ is globally defined in~$\mathbb{C}^{*n}$.

Finally, we say that the germ at $\mathbf{0}$ of $V(f^1,\ldots,f^{k_0})$  is a germ of a \emph{locally tame} complete intersection variety if there is a positive number $R(f^1,\ldots,f^{k_0})$ such that for any non-empty subset $I:=\{i_1,\ldots, i_m\}\in \mathcal{V}_{f^1}\cap\cdots\cap \mathcal{V}_{f^{k_0}}$, any non-zero weight vector $\mathbf{w}$ with $I(\mathbf{w})=I$, and any non-zero complex numbers $u_{i_1},\ldots, u_{i_m}$ satisfying the inequality
\begin{equation*}
\sum_{j=1}^m\vert u_{i_j}\vert^2 < R(f^1,\ldots,f^{k_0}), 
\end{equation*}
the toric variety
\begin{equation*}
V^*(f^1_{\mathbf{w}},\ldots,f^{k_0}_{\mathbf{w}})\cap 
\mathbb{C}^{*n}(u_{i_1},\ldots, u_{i_m})
\end{equation*}
is a reduced, non-singular, complete intersection variety in $\mathbb{C}^{*n}(u_{i_1},\ldots, u_{i_m})$. Again, this means that the $k_0$-form $df^1_{\mathbf{w}}\wedge \cdots\wedge df^{k_0}_{\mathbf{w}}$ is nowhere vanishing in $V^*(f^1_{\mathbf{w}},\ldots,f^{k_0}_{\mathbf{w}})\cap \mathbb{C}^{*n}(u_{i_1},\ldots, u_{i_m})$. Any positive number $R(f^1,\ldots,f^{k_0})$ satisfying the above property is called a \emph{radius of local tameness} of the set of functions $\{f^1,\ldots,f^{k_0}\}$.

\section{Whitney equisingularity}\label{sect-we}

Now, let $(t,\mathbf{z}):=(t,z_1,\ldots, z_n)$ be coordinates for $\mathbb{C}\times \mathbb{C}^n$, and for any $k\in K_0:=\{1,\ldots,k_0\}$, let $f^k \colon \mathbb{C}\times \mathbb{C}^n \rightarrow \mathbb{C}$ be a non-constant polynomial function such that $f^k(t,\mathbf{0})=0$ for all $t$. Put 
\begin{align*}
f(t,\mathbf{z}) := f^1(t,\mathbf{z}) \cdots f^{k_0}(t,\mathbf{z}), 
\end{align*}
and as usual, write $f_t(\mathbf{z}):=f(t,\mathbf{z})$ and $f^k_t(\mathbf{z}):=f^k(t,\mathbf{z})$. 
We suppose that for any sufficiently small $t$, the following two conditions hold true:
\begin{enumerate}
\item[$\cdot$]
for any $k\in K_0$, the Newton boundary $\Gamma(f^k_t)$ is independent of $t$ (especially, $\mathcal{V}_{f_t^k}$ is independent of $t$);
\item[$\cdot$]
for any $\{k_1,\ldots,k_p\}\subseteq K_0$, the germ at $\mathbf{0}$ of $V(f^{k_1}_t,\ldots,f^{k_p}_t)$ is a germ of a non-degenerate complete intersection variety.
\end{enumerate}
Then, by \cite[Chap.~V, Lemma (2.8.2)]{O6}, there is a neighbourhood of the origin in which any subset of the form 
\begin{align}\label{p3-thesubset}
\bigcap_{k\in K} V(f^k)\cap (\mathbb{C}\times \mathbb{C}^{*I})
\end{align}
is non-singular, where $K\subseteq K_0$ and $I\subseteq \{1,\ldots,n\}$. Note that if for all $k\in K$ the restriction $f^k\vert_{\mathbb{C}\times\mathbb{C}^I}$ is not identically zero, then the subset \eqref{p3-thesubset} is also a complete intersection variety (ibid.).
Hereafter, we shall write 
\begin{align*}
V^{*I}(f^k):=V(f^k)\cap (\mathbb{C}\times \mathbb{C}^{*I}). 
\end{align*}
It follows that the collection $\mathcal{S}$ of all non-empty subsets of the form
\begin{align*}
S^I(K) & := \{(t,\mathbf{z})\in\mathbb{C}\times \mathbb{C}^{*I} \mid 
f^k(t,\mathbf{z})=0 \Leftrightarrow k\in K\}\\
& \ = \bigcap_{k\in K} V^{*I}(f^k) \bigg\backslash \bigcup_{k\in K_0\setminus K} V^{*I}(f^k)
\end{align*}
is a complex analytic stratification of $V(f)$. We call $\mathcal{S}$ the \emph{canonical toric stratification} of $V(f)$. Note that it includes $S^{\emptyset}(K_0)=\mathbb{C}\times\{\mathbf{0}\}$ (i.e., the $t$-axis) as a stratum. 

\begin{remark}
For a given $I$, if $S^I(K)\not=\emptyset$, then $K$ necessarily contains all $k$'s for which $f^k\vert_{\mathbb{C}\times\mathbb{C}^I}\equiv 0$ plus possibly some other $k$'s such that $f^k\vert_{\mathbb{C}\times\mathbb{C}^I}\not\equiv 0$.
\end{remark}

To state the results of this paper, we need to strengthen a bit the assumptions made above. Precisely we fix the following terminology.

\begin{definition}\label{maindef}
We say that the family $\{f_t\}_t$ is \emph{Newton-admissible} if for any sufficiently small $t$, the following two conditions hold true:
\begin{enumerate}
\item[$\cdot$]
for any $k\in K_0$, the Newton boundary $\Gamma(f^k_t)$ is independent of $t$;
\item[$\cdot$]
for any $\{k_1,\ldots,k_p\}\subseteq K_0$, the germ at $\mathbf{0}$ of $V(f^{k_1}_t,\ldots,f^{k_p}_t)$ is a germ of a non-degenerate, locally tame, complete intersection variety, and there is a radius of local tameness $R(f^{k_1}_t,\ldots,f^{k_p}_t)$ for the corresponding set of functions $\{f^{k_1}_t,\ldots,f^{k_p}_t\}$ which is greater than some positive number $R$ independent of $t$ and of the choice of the subset $\{k_1,\ldots,k_p\}$.
\end{enumerate}
\end{definition}

The inequality $R(f^{k_1}_t,\ldots,f^{k_p}_t)>R$, which holds for all small $t$, expresses the fact that the family $\{V(f^{k_1}_t,\ldots,f^{k_p}_t)\}_t$ is \emph{uniformly} locally tame.

The main result of this section is stated as follows.

\begin{theorem}\label{mt}
If the family $\{f_t\}_t$ is Newton-admissible, then the canonical toric stratification $\mathcal{S}$ of $V(f)$ is Whitney $(b)$-regular. In particular, the corresponding family of hypersurfaces $\{V(f_t)\}_t$ is Whitney equisingular. 
\end{theorem}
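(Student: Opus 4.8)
The plan is to verify Whitney $(b)$-regularity of the pair $(S^I(K), S^{I'}(K'))$ for any two strata of $\mathcal{S}$ with $S^I(K)$ in the closure of $S^{I'}(K')$ — by the stratified structure this forces $I\subseteq I'$ and $K'\subseteq K$ — and to do so by contradiction using the curve-selection lemma. So suppose $(b)$-regularity fails along a real analytic curve: there are sequences $\mathbf q_\nu=(t_\nu,\mathbf z_\nu)\in S^I(K)$ and $\mathbf p_\nu=(s_\nu,\mathbf w_\nu)\in S^{I'}(K')$ both tending to a point $\mathbf p_0\in S^I(K)$, with the secant lines $\overline{\mathbf p_\nu\mathbf q_\nu}$ converging to a line $\ell$ and the tangent planes $T_{\mathbf p_\nu}S^{I'}(K')$ converging to a plane $T$, but $\ell\not\subseteq T$. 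After applying the curve selection lemma we may assume all data vary real-analytically in a parameter $s\to 0$; after a further reduction we move $\mathbf p_0$ to the origin and work with Puiseux expansions.

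The core of the argument is a toric/weight-vector analysis. From the Puiseux expansion of the curve $\mathbf p(s)$ one reads off a weight vector $\mathbf w=(w_1,\dots,w_n)$ recording the orders of vanishing of the coordinates $z_i(s)$; the coordinates that stay bounded away from zero correspond to $I=I(\mathbf w)$, and the essential non-compact face condition $f^k|_{\mathbb{C}\times\mathbb{C}^I}\equiv 0$ holds exactly for $k\in K$. The leading term of $f^k_t(\mathbf p(s))$ along the curve is governed by the face function $f^k_{t,\mathbf w}$. The key point is that the tangent space $T$ contains (a translate of) the kernel of the Jacobian of $(f^{k_1}_{t,\mathbf w},\dots,f^{k_p}_{t,\mathbf w})$ — where $\{k_1,\dots,k_p\}$ are the indices in $K$ for which the restriction to $\mathbb C\times\mathbb C^I$ is not identically zero — evaluated at the limit point in the toric stratum, and that this Jacobian has maximal rank $p$ by the non-degenerate locally tame hypothesis applied with the fixed coordinates $(u_i)_{i\in I}$ inside the ball of radius $R$. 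The uniform local tameness (radius $R$ independent of $t$) is exactly what guarantees that the limit point $(u_i)_{i\in I}$, being the limit of the bounded coordinates of $\mathbf p_0$, lies in the range where the toric variety $V^*(f^{k_1}_{t,\mathbf w},\dots)\cap\mathbb{C}^{*n}(u_{i_1},\dots)$ is nonsingular; without uniformity the radius could shrink to zero as $t\to 0$ and the argument would collapse. One then shows that the limiting secant direction $\ell$ — which, by a standard computation with Puiseux series, is a $\partial/\partial s$-type tangent vector to the curve — actually lies in $T$, contradicting $\ell\not\subseteq T$.

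The step I expect to be the main obstacle is exactly the one flagged in the introduction: because $f_t=f^1_t\cdots f^{k_0}_t$ is a product, the $1$-forms $df^{k_1}_{t,\mathbf w}(\rho(s)),\dots,df^{k_p}_{t,\mathbf w}(\rho(s))$ are linearly independent for each $s>0$ (by non-degeneracy) but their limits as $s\to 0$ may become linearly dependent, so one cannot simply pass to the limit in the wedge $df^{k_1}_{t,\mathbf w}\wedge\cdots\wedge df^{k_p}_{t,\mathbf w}$ and read off the limiting tangent plane. The remedy is to normalize each form individually — divide $df^{k_j}_{t,\mathbf w}(\rho(s))$ by an appropriate power of $s$ so that it has a nonzero finite limit $\omega^{k_j}$ — and to carefully track which sub-collection of the $\omega^{k_j}$ remains independent in the limit and which become dependent; the dependent ones must be handled by looking at the next-order terms in the Puiseux expansion, i.e.\ by a secondary weight-vector refinement. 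This bookkeeping, organized as an induction on the number of "degenerating" forms, is the technical heart of the proof. Once Whitney $(b)$-regularity of every pair is established, the "in particular" clause is immediate: the $t$-axis $S^\emptyset(K_0)=\mathbb C\times\{\mathbf 0\}$ is a stratum, so by the Thom–Mather first isotopy theorem the family $\{V(f_t)\}_t$ is locally topologically trivial along it, i.e.\ Whitney equisingular in the sense defined in the introduction.
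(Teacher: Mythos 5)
Your overall strategy—curve selection, weight-vector/Puiseux analysis, normalization of the limiting $1$-forms, and the role of uniform local tameness in guaranteeing that the toric variety $V^*(f^{k_1}_{\tau,\mathbf w},\dots)\cap\mathbb{C}^{*n}(u_{i_1},\dots,u_{i_m})$ is smooth at the limit—matches the paper's, and you correctly flag the central difficulty: the normalized limits of $df^{k}(\rho(s))$ may become linearly dependent even though the forms are independent for $s\neq 0$. However, your remedy for that difficulty is too vague to constitute a proof. The paper's device (Lemma~\ref{lemma-fundlil}) is not a ``secondary weight-vector refinement'' or an analysis of ``next-order terms'': one replaces $df^{k}(\rho(s))$ by $\varphi^{k}(s)=df^{k}(\rho(s))+\sum_{l<k}c_{k,l}(s)\,df^{l}(\rho(s))$ with polynomial coefficients chosen to \emph{maximize the relative order}, and the whole argument hinges on the a priori bound $\nu_{k}\leq d-w_{\min}$ for the order of the modified form. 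That bound (Claim~\ref{expr-varphik1}) is exactly where local tameness enters—it forces a nonvanishing $k_{L'}\times k_{L'}$ minor of the Jacobian in the directions $i\notin I$—and without it the relative orders could escape to infinity, destroying the decomposability of $\omega_\infty$. Simply normalizing each form individually and then ``looking at next-order terms'' does not produce this bound and is not how the degeneration is controlled.

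There are two further issues. First, you apply the locally-tame hypothesis to ``the indices in $K$ for which the restriction to $\mathbb{C}\times\mathbb{C}^{I}$ is \emph{not} identically zero''; this is backwards. The fixed-coordinates version of local tameness requires $I\in\mathcal{V}_{f^{k}}$, i.e.\ $f^{k}\vert_{\mathbb{C}\times\mathbb{C}^{I}}\equiv 0$ (the paper's set $L'$); the complementary indices $L\setminus L'$ are handled by ordinary non-degeneracy in $\mathbb{C}^{*n}$ (Lemma~\ref{lemma-fundlil2}), since for those $k$ one has $f^{k}_{\tau,\mathbf w}=f^{k}_\tau\vert_{\mathbb{C}^{I}}$, a function of the $I$-variables only. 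Second, the final step—showing $\omega_{k}(\ell_\infty)=0$—is compressed to the remark that $\ell$ is ``a $\partial/\partial s$-type tangent vector to the curve,'' which is not literally true ($\ell_\infty$ is a normalized secant between two distinct curves) and elides the actual bifurcated argument: for $k\in L'$ one exploits the special shape of $\ell_\infty$ (its only possibly nonzero entries beyond the $I$-block sit in the minimal-weight positions) together with the Euler-type relation obtained from $\frac{d}{ds}\bigl(f^{l}\circ\rho\bigr)\equiv 0$, while for $k\in L\setminus L'$ one uses a first-order Taylor estimate of $f^{k}$ between $\rho(s)$ and $\rho'(s)$, a genuinely different mechanism. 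As written, these gaps mean the proposal identifies the right obstacles but does not close them.
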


Combined with the Thom--Mather first isotopy theorem (see \cite{T,Ma}), Theorem \ref{mt} shows that if $\{f_t\}_t$ is a Newton-admissible family, then the local, ambient, topological type of $V(f_t)$ at $\mathbf{0}$ is independent of $t$ for all small $t$.

Theorem \ref{mt} is proved in Section \ref{sect-proofmt}.
It extends our previous result \cite[Theorem 3.8]{EO} to a much larger class of hypersurfaces with non-isolated singularities. 

The Whitney $(b)$-regularity along the $t$-axis for strata of the form $S^J(L)$, $J\notin\mathcal{V}_{f^k_t}$, $k\in L$, is already obtained in \cite[Chap.~V, Theorem (2.8)]{O6}. However, this is not enough to ensure that the family $\{V(f_t)\}_t$ is Whitney equisingular (or even topologically equisingular). We also need that pairs of strata $S^I(K)\subseteq\overline{S^J(L)}$, with $I\in\mathcal{V}_{f^k_t}$ for $k\in K$ and $J\notin\mathcal{V}_{f^k_t}$ for $k\in L$, satisfy the  Whitney $(b)$-regularity condition.
On the other hand, if we only ask for Whitney's $(b)$-regularity only along the $t$-axis for strata of the form $S^J(L)$, $J\notin\mathcal{V}_{f^k_t}$, $k\in L$, then the uniform local tameness assumption is not needed; only the stability of the Newton boundary and the non-degeneracy condition are required.

The topological equisingularity can be also obtained (it is done in \cite[Chap.~V, Corollary (2.11)]{O6}) under the stability of the Newton boundary, the non-degeneracy condition, and an additional assumption which guarantees the independence in $t$ of the face functions $f^1_{t,\mathbf{w}},\ldots,f^{k_0}_{t,\mathbf{w}}$ of $f^1_{t},\ldots,f^{k_0}_{t}$ with respect to so-called ``essential'' weight vectors $\mathbf{w}$ (here, ``essential'' means $\emptyset\not=I(\mathbf{w})\in \mathcal{V}_{f^1_t}\cap\cdots\cap \mathcal{V}_{f^{k_0}_t}$ and $V(f^1_{t,\mathbf{w}},\ldots,f^{k_0}_{t,\mathbf{w}})\not=\emptyset$).

In \cite[\S 8]{O}, another Whitney stratification of the pair $(V(f),\mathbb{C}\times \{\mathbf{0}\})$\textemdash with a larger number of strata than ours\textemdash is constructed under a different, rather technical, assumption (so-called ``simultaneous IND-condition''). Under this condition, both Whitney and topological equisingularities do hold too.

The following is an immediate corollary of Theorem \ref{mt}.

\begin{corollary}\label{cormt}
If the family $\{f_t\}_t$ is Newton-admissible, then for any sufficiently small $t$, the partition $\mathcal{S}_t=\{S^I_t(K)\}_{I,K}$  of $\{t\}\times\mathbb{C}^n$, defined by 
\begin{equation*}
S^I_t(K):=S^I(K)\cap (\{t\}\times\mathbb{C}^n)
\end{equation*}
where $I\subseteq \{1,\ldots,n\}$ and $K\subseteq K_0$, is a Whitney $(b)$-regular stratification of $V(f_t)$ in a neighbourhood of the origin of $\mathbb{C}^n$ which is independent of $t$. (Here, we identify $\{t\}\times\mathbb{C}^n$ with $\mathbb{C}^n$.) More precisely, there exist an open disc $\mathring{D}_\tau\subseteq \mathbb{C}$ with radius $\tau>0$ and an open ball $\mathring{B}_r\subseteq\mathbb{C}^n$ with radius $r>0$ centred at the origins of $\mathbb{C}$ and $\mathbb{C}^n$, respectively, such that for any $t\in \mathring{D}_\tau$, the collection $\{S^I_t(K)\cap \mathring{B}_r\}_{I,K}$ is a Whitney $(b)$-regular stratification of $V(f_t)\cap \mathring{B}_r$.
\end{corollary}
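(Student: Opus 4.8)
The plan is to deduce Corollary~\ref{cormt} from Theorem~\ref{mt} by a transversality argument, completed by the Thom--Mather first isotopy theorem for the topological part of the assertion. By Theorem~\ref{mt}, the canonical toric stratification $\mathcal{S}$ of $V(f)$ is Whitney $(b)$-regular in some neighbourhood $U$ of $\mathbf{0}$ in $\mathbb{C}\times\mathbb{C}^n$ and contains the $t$-axis $S^{\emptyset}(K_0)=\mathbb{C}\times\{\mathbf{0}\}$ as a stratum. The crux is to prove that, after shrinking $U$ if necessary, the first projection $\pi\colon\mathbb{C}\times\mathbb{C}^n\to\mathbb{C}$, $(t,\mathbf{z})\mapsto t$, restricts to a submersion on every stratum $S^I(K)$ of $\mathcal{S}$. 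Once this is established, the affine subspace $\{t\}\times\mathbb{C}^n$ is transverse to all strata of $\mathcal{S}$ for each small $t$, and both conclusions of the corollary follow from standard facts.

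For the submersivity step I would argue directly with differentials, rather than through a fibre-dimension count (which is delicate because of possible non-reducedness of scheme-theoretic fibres). Fix a stratum $S^I(K)$ and let $\{k_1,\dots,k_p\}$ be the set of $k\in K$ with $f^k\vert_{\mathbb{C}\times\mathbb{C}^I}\not\equiv 0$; by the Remark preceding Definition~\ref{maindef} the remaining indices of $K_0$ impose no condition on $\mathbb{C}\times\mathbb{C}^{*I}$, so $S^I(K)$ is an open subset of the complete intersection $\bigcap_{j=1}^{p}V(f^{k_j})\cap(\mathbb{C}\times\mathbb{C}^{*I})$ (cf.~\eqref{p3-thesubset}), which is non-singular near $\mathbf{0}$ by \cite[Chap.~V, Lemma~(2.8.2)]{O6}. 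Now fix $q=(t,\mathbf{z})\in S^I(K)$ close to $\mathbf{0}$ and, restricting the $1$-forms $df^{k_j}(q)$ to $T_q(\mathbb{C}\times\mathbb{C}^{*I})=\mathbb{C}\oplus\mathbb{C}^I$, write $df^{k_j}(q)=a_j\,dt+\omega_j$, where $\omega_j$ is the differential of $f^{k_j}_t$ in the variables $(z_i)_{i\in I}$ evaluated at $q$. Since the non-degeneracy hypothesis is assumed for \emph{every} small $t$, it implies, by \cite{O3} (equivalently, by \cite[Chap.~V, Lemma~(2.8.2)]{O6} applied to the constant family $f_t$), that the slice $\bigcap_{j=1}^{p}V(f^{k_j}_t)\cap\mathbb{C}^{*I}$ is non-singular at $q$; by the Jacobian criterion this means precisely that $\omega_1,\dots,\omega_p$ are linearly independent. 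Hence the linear system $\omega_j(\mathbf{v})=-a_j$ ($1\le j\le p$) has a solution $\mathbf{v}\in\mathbb{C}^I$, the vector $(1,\mathbf{v})$ lies in $T_qS^I(K)=\bigcap_{j=1}^{p}\ker df^{k_j}(q)$, and $d\pi_q(1,\mathbf{v})=1$; thus $\pi\vert_{S^I(K)}$ is a submersion near $\mathbf{0}$. This is the only point at which the hypotheses enter beyond Theorem~\ref{mt} itself — and in fact only the non-degeneracy of each $f_t$, not the uniform local tameness, is used here — so I expect it to be the sole genuine difficulty.

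Granting the submersivity, $\{t\}\times\mathbb{C}^n$ meets every stratum of $\mathcal{S}$ transversally inside $U$ for each small $t$. By the standard fact that a transverse slice of a Whitney $(b)$-regular stratified set is again Whitney $(b)$-regular (the argument invokes Whitney's condition $(a)$, a consequence of $(b)$; see \cite{GWPL}), the induced partition $\{S^I_t(K)\cap U\}_{I,K}$ is a Whitney $(b)$-regular stratification of $V(f_t)\cap U$. Choosing $\tau>0$ and $r>0$ with $\mathring{D}_\tau\times\mathring{B}_r\subseteq U$ then gives exactly the uniform disc $\mathring{D}_\tau$ and ball $\mathring{B}_r$ demanded in the statement. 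Finally, for the clause ``independent of $t$'' understood in its stronger, topological sense, one combines Theorem~\ref{mt} with the Thom--Mather first isotopy theorem \cite{T,Ma}: after possibly shrinking $\tau$, and using Proposition~\ref{lemma-fpofmt3} to control the strata near the bounding sphere, $\pi$ restricts to a proper stratified submersion on $V(f)\cap(\mathring{D}_\tau\times\overline{B}_r)$, and the theorem then furnishes a stratum-preserving homeomorphism of $V(f)\cap(\mathring{D}_\tau\times\overline{B}_r)$ onto $\mathring{D}_\tau\times\bigl(V(f_0)\cap\overline{B}_r\bigr)$ over $\mathring{D}_\tau$ carrying each $S^I_t(K)\cap\overline{B}_r$ onto $\{t\}\times\bigl(S^I_0(K)\cap\overline{B}_r\bigr)$; this is the natural extension, to the setting of a product of functions, of the analogous statement in the single-function case treated in \cite{EO}.
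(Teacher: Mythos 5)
Your overall route — transversality of the slices $\{t\}\times\mathbb{C}^n$ to the strata of $\mathcal{S}$, followed by the standard fact that Whitney $(b)$-regular stratifications are preserved under transverse intersection — is exactly the paper's route; the paper states it as a one-sentence justification immediately after the corollary, citing \cite{GWPL} for the slicing step and asserting transversality without further detail. What you add is an explicit differential argument for why the projection $\pi\colon(t,\mathbf{z})\mapsto t$ is a submersion on each stratum, and this is where a genuine gap appears: your criterion is that the forms $\omega_1,\dots,\omega_p$ (the differentials of $f^{k_j}_t$ restricted to the $\mathbb{C}^I$-directions) be linearly independent at $q=(t,\mathbf{z})$, which you derive from the non-singularity of the slice $\bigcap_j V(f^{k_j}_t)\cap\mathbb{C}^{*I}$ near $\mathbf{0}$. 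But for a fixed $t$ this non-singularity is only guaranteed in a ball of some radius $r_t>0$, and nothing in your argument gives a uniform lower bound for $r_t$ as $t$ ranges over a small disc. Since the corollary asks precisely for a uniform $\mathring{D}_\tau\times\mathring{B}_r$, this is the crux; you claim the submersion property ``after shrinking $U$'' but never establish the shrinking is $t$-independent.

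The clean way to obtain the uniform transversality — and almost certainly what the paper has in mind — is to use Whitney $(a)$-regularity over the $t$-axis rather than the Jacobian criterion on slices. If the assertion failed, one could pick a sequence $q_m\to(0,\mathbf{0})$ lying (after passing to a subsequence, using that there are finitely many strata) in a single stratum $S$, with $T_{q_m}S\subseteq\{0\}\times\mathbb{C}^n$; any limit $T$ of the $T_{q_m}S$ then lies in $\{0\}\times\mathbb{C}^n$, whereas Whitney $(a)$-regularity of the pair $(S,\mathbb{C}\times\{\mathbf{0}\})$ at the origin forces $T\supseteq\mathbb{C}\times\{\mathbf{0}\}$ — a contradiction. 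This yields the uniform $\tau,r$ directly, with no need to control the slice non-singularity radius. You could also patch your argument by proving a uniform version of the slice non-singularity from the constancy of the Newton boundary (in the spirit of Proposition~\ref{proposition-us}), but that is extra work the Whitney route avoids.

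Finally, the closing paragraph invoking the Thom--Mather first isotopy theorem and Proposition~\ref{lemma-fpofmt3} goes beyond the corollary. In the statement, ``which is independent of $t$'' qualifies the \emph{neighbourhood}, as the ``More precisely'' sentence that follows makes plain; topological equisingularity is a separate consequence, which the paper draws in the discussion after Theorem~\ref{mt}, not inside Corollary~\ref{cormt}. That material is true but not needed here.
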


Indeed, if $\{f_t\}_t$ is Newton-admissible, then there exist $\tau,r>0$ such that for any $t\in \mathring{D}_\tau$, the space $\{t\}\times\mathbb{C}^n$ transversely intersects all the strata of $\mathcal{S}$ in the neighbourhood $\mathring{D}_\tau\times \mathring{B}_r$ of the origin of $\mathbb{C}\times \mathbb{C}^n$, and it is well known that Whitney $(b)$-regular stratifications are preserved under transverse intersections (see \cite{GWPL}).

\begin{remark}
If the family $\{f_t\}_t$ is Newton-admissible, then Theorem \ref{mt} shows that the collection of subsets $\{S^I(K_0)\}_{I\subseteq \{1,\ldots,n\}}$ is a Whitney $(b)$-regular stratification of $V(f^1,\ldots,f^{k_0})$ with the $t$-axis as a stratum. In particular, by the Thom--Mather first isotopy theorem again, the local, ambient, topological type at $\mathbf{0}$ of the complete intersection variety $V(f_t^1,\ldots,f_t^{k_0})$ is independent of $t$ for all small $t$.

Similarly, Corollary \ref{cormt} shows that if $\{f_t\}_t$ is Newton-admissible, then for any sufficiently small $t$, the collection of subsets $\{S^I(K_0)\cap (\{t\}\times\mathbb{C}^n)\}_{I\subseteq \{1,\ldots,n\}}$ is a Whitney $(b)$-regular stratification of the complete intersection variety $V(f_t^1,\ldots,f_t^{k_0})$ in a neighbourhood of the origin of $\mathbb{C}^n$ which is independent of $t$.
\end{remark}

Theorem \ref{mt} has several other important corollaries. They are stated in the following two sections.

\section{Thom's $a_f$ condition}\label{sect-tafc}

Pick a sufficiently small representative of (the germ at $\mathbf{0}$ of) $f$ so that $0$ is the only possible critical value of $f$. Then the critical locus $\Sigma f$ of $f$ is contained in $V(f)$.
We say that a Whitney $(a)$-regular stratification of $V(f)$ satisfies \emph{Thom's $a_f$ condition} if for any stratum $S$, any point  $(\tau,\mathbf{q})\in S$, and any sequence $\{(\tau_m,\mathbf{q}_m)\}_m\notin V(f)$ such that
\begin{equation*}
(\tau_m,\mathbf{q}_m)\to (\tau,\mathbf{q})
\quad\mbox{and}\quad
T_{(\tau_m,\mathbf{q}_m)}V(f-f(\tau_m,\mathbf{q}_m))\to T
\end{equation*}
as $m\to\infty$, we have $T_{(\tau,\mathbf{q})}S\subseteq T$,
where 
\begin{equation*}
V(f-f(\tau_m,\mathbf{q}_m)):=\{(t,\mathbf{z})\in \mathbb{C}\times\mathbb{C}^n \mid f(t,\mathbf{z})=f(\tau_m,\mathbf{q}_m)\}
\end{equation*}
and $T_{(\tau_m,\mathbf{q}_m)}V(f-f(\tau_m,\mathbf{q}_m))$ is the tangent space of $V(f-f(\tau_m,\mathbf{q}_m))$ at $(\tau_m,\mathbf{q}_m)$. Similarly, $T_{(\tau,\mathbf{q})}S$ is the tangent space of $S$ at $(\tau,\mathbf{q})$.

Combined with a result of Parusi\'nski \cite{P} and Brian\c con--Maisonobe--Merle \cite{BMM}, Theorem \ref{mt} above implies the following statement.

\begin{theorem}\label{mt2}
If the family $\{f_t\}_t$ is Newton-admissible, then the canonical toric stratification $\mathcal{S}$ of $V(f)$ satisfies Thom's $a_f$ condition.
\end{theorem}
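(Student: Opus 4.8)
The plan is to deduce Theorem~\ref{mt2} from Theorem~\ref{mt} together with the implications ``Whitney $(b)$-regularity $\Rightarrow$ $w_f$ condition $\Rightarrow$ Thom's $a_f$ condition'' established by Parusi\'nski \cite{P} and Brian\c con--Maisonobe--Merle \cite{BMM}. By Theorem~\ref{mt}, if $\{f_t\}_t$ is Newton-admissible, then the canonical toric stratification $\mathcal{S}$ of $V(f)$ is Whitney $(b)$-regular. The subtle point is that the results of \cite{P,BMM} are stated for a stratification of $V(f)$ that is Whitney $(b)$-regular \emph{and} has good incidence behaviour with respect to the function $f$; one must be slightly careful about which stratification of the ambient space $\mathbb{C}\times\mathbb{C}^n$ one uses, and about the role of the nearby levels $V(f-\eta)$ rather than just $V(f)$.

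First I would recall the precise statement we invoke: given a complex analytic function $f$ and a Whitney $(b)$-regular stratification of $V(f)=f^{-1}(0)$ which can be completed to a Whitney stratification of a neighbourhood of $V(f)$ in $\mathbb{C}\times\mathbb{C}^n$ compatible with $f$ (i.e.\ with $V(f)\setminus\Sigma f$ and the complement as additional strata), the theorem of \cite{P,BMM} asserts that this stratification automatically satisfies the $w_f$ condition, hence in particular Thom's $a_f$ condition. In our situation, the complement of $V(f)$ in a small representative is smooth (since $0$ is the only critical value, $f$ is a submersion off $V(f)$) and the strata of $\mathcal{S}$ together with $(\mathbb{C}\times\mathbb{C}^n)\setminus V(f)$ form a Whitney $(b)$-regular stratification of (a neighbourhood of $\mathbf{0}$ in) $\mathbb{C}\times\mathbb{C}^n$: the only incidences to check are $S^I(K)\subseteq\overline{(\mathbb{C}\times\mathbb{C}^n)\setminus V(f)}$, and Whitney regularity of a pair in which the bigger stratum is open in the ambient space is automatic. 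Hence the hypotheses of \cite{P,BMM} are met.

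The second step is to transfer the conclusion to the formulation of Thom's $a_f$ condition as stated in Section~\ref{sect-tafc}, namely: for any stratum $S\in\mathcal{S}$, any $(\tau,\mathbf{q})\in S$, and any sequence $(\tau_m,\mathbf{q}_m)\notin V(f)$ with $(\tau_m,\mathbf{q}_m)\to(\tau,\mathbf{q})$ and $T_{(\tau_m,\mathbf{q}_m)}V(f-f(\tau_m,\mathbf{q}_m))\to T$, one has $T_{(\tau,\mathbf{q})}S\subseteq T$. Since the points $(\tau_m,\mathbf{q}_m)$ all lie in the open stratum $(\mathbb{C}\times\mathbb{C}^n)\setminus V(f)$, the level hypersurface $V(f-f(\tau_m,\mathbf{q}_m))$ through $(\tau_m,\mathbf{q}_m)$ is exactly the fibre of $f$ through that point, and the $a_f$ condition in the sense of \cite{P,BMM} for the pair $\big(S,(\mathbb{C}\times\mathbb{C}^n)\setminus V(f)\big)$ is precisely the inclusion $T_{(\tau,\mathbf{q})}S\subseteq T$ for every such limit $T$ of tangent planes to these fibres. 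This is verbatim the condition to be proved, so no further argument is needed once the hypotheses of \cite{P,BMM} have been verified.

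The main obstacle is essentially bookkeeping rather than a genuine mathematical difficulty: one must make sure that the stratification of the \emph{total space} $V(f)\subseteq\mathbb{C}\times\mathbb{C}^n$ to which we apply \cite{P,BMM} is the one whose $(b)$-regularity is guaranteed by Theorem~\ref{mt}, and that ``Thom's $a_f$ condition'' there coincides with our Definition in Section~\ref{sect-tafc} (in particular that the nearby fibres involved are indeed levels of $f$, not of the individual factors $f^k$). Once these compatibilities are in place, the proof is a one-line citation. Alternatively — and this is the content of Section~\ref{sect-proofmt2}, which I would mention but not carry out here — one can bypass \cite{P,BMM} entirely and prove the $a_f$ condition by a direct curve-selection argument: take a real analytic arc $(\tau(s),\mathbf{q}(s))\to(\tau,\mathbf{q})$ off $V(f)$, compute the limit of the gradient direction $\overline{\nabla f}(\tau(s),\mathbf{q}(s))$ using the Newton-admissibility hypotheses (non-degeneracy and uniform local tameness give control of the leading terms of $f^k_t$ along the arc with respect to the relevant weight vector $\mathbf{w}$), and check directly that this limiting normal direction annihilates $T_{(\tau,\mathbf{q})}S$; this is the same kind of computation that underlies the proof of Theorem~\ref{mt}, with the added feature noted in the introduction that the limits of the one-forms $df^{k_j}_{t,\mathbf{w}}$ may become linearly dependent.
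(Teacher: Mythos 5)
Your proposal is correct and follows exactly the paper's own argument: deduce $(b)$-regularity of $\mathcal{S}$ from Theorem~\ref{mt}, then invoke Parusi\'nski \cite{P} and Brian\c con--Maisonobe--Merle \cite{BMM} to pass from $(b)$-regularity to the $w_f$ condition and hence to Thom's $a_f$ condition. The extra care you take in checking that $\mathcal{S}$ together with the open complement of $V(f)$ forms a Whitney stratification of the ambient space, and that the definition of $a_f$ in Section~\ref{sect-tafc} matches the one used in \cite{P,BMM}, is a reasonable elaboration of details the paper leaves implicit, and your sketch of the alternative direct computation correctly reflects the content of Section~\ref{sect-proofmt2}.
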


Indeed, by Theorem \ref{mt}, the stratification $\mathcal{S}$ is Whitney $(b)$-regular. Then Theorem p.~99 of \cite{P} or Theorem 4.3.2 of \cite{BMM} tell us that $\mathcal{S}$ satisfies so-called $w_f$ condition, and hence, Thom's $a_f$ condition.
Theorem \ref{mt2} can also be proved by a direct explicit calculation, without invoking \cite{P} or \cite{BMM} (see Section~\ref{sect-proofmt}). 
We thank L\^e D\~ung Tr\'ang who pointed out us the result of Parusi\'nski, Brian\c con, Maisonobe and Merle, which considerably simplifies our original direct proof.

By a straightforward extension of \cite[Theorem 5]{O3}, if for any subset $\{k_1,\ldots,k_p\}\subseteq K_0$, the germ at $(0,\mathbf{0})\in\mathbb{C}\times\mathbb{C}^n$ of the variety $V(f^{k_1},\ldots,f^{k_p})$ is a germ of a non-degenerate, locally tame, complete intersection variety, then the canonical toric stratification of $V(f)$ satisfies Thom's $a_f$ condition too. However, note that in this latter statement, the assumptions refer to the functions $f^{k_j}(t,\mathbf{z})$ whereas in Theorem \ref{mt2} they refer to the members $f^{k_j}_t(\mathbf{z})$ of the families $\{f^{k_j}_t\}_t$ defined by the functions $f^{k_j}(t,\mathbf{z})$.

The following proposition, which asserts the ``uniform'' smoothness of the nearby fibres of a Newton-admissible family, will be very useful.

\begin{proposition}\label{proposition-us}
If the family $\{f_t\}_t$ is Newton-admissible, then the nearby fibres $V(f_t-\eta)$, $\eta\not=0$, of the functions $f_t$ are ``uniformly'' non-singular with respect to the deformation parameter $t$. That is, there exist positive numbers $\delta,\tau,r$ such that for any $0<\vert\eta\vert\leq\delta$ and any $0\leq\vert t\vert\leq \tau$, the level hypersurface $V(f_t-\eta)$ is non-singular in $\mathring{B}_r$. (Here, $\mathring{B}_r$ denotes the open ball with radius $r$ centred at the origin.)
\end{proposition}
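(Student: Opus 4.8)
The plan is to combine Theorem \ref{mt} (Whitney $(b)$-regularity of $\mathcal{S}$) with Thom's $a_f$ condition from Theorem \ref{mt2}, together with a compactness/curve-selection argument to extract the \emph{uniformity} in $t$. First I would fix small representatives: choose $\tau, r>0$ such that $V(f)\cap(\mathring D_\tau\times\mathring B_r)$ carries the stratification $\mathcal{S}$ restricted to this neighbourhood, that $0$ is the only critical value of the chosen representative of $f$, and that (by Theorem \ref{mt2}) the restricted stratification satisfies Thom's $a_f$ condition there. Suppose, for contradiction, that no triple $(\delta,\tau,r)$ works. Then there exist sequences $t_m\to 0$, $\eta_m\to 0$ with $\eta_m\neq 0$, and points $\mathbf{q}_m\in\mathring B_r$ with $f_{t_m}(\mathbf{q}_m)=\eta_m$ and $\mathbf{q}_m$ a singular point of $V(f_{t_m}-\eta_m)$, i.e.\ $d_{\mathbf{z}}f_{t_m}(\mathbf{q}_m)=0$. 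Shrinking $r$ slightly and passing to a subsequence, the points $(t_m,\mathbf{q}_m)$ converge to some $(0,\mathbf{q})$ with $\mathbf{q}\in\overline{\mathring B_r}$; since $f_{t_m}(\mathbf{q}_m)=\eta_m\to 0$ we get $f(0,\mathbf{q})=0$, so $(0,\mathbf{q})\in V(f)$ and lies in some stratum $S=S^I(K)\in\mathcal{S}$.

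The next step is to push this contradiction through. Because $d_{\mathbf{z}}f_{t_m}(\mathbf{q}_m)=0$, the tangent hyperplane $T_{(t_m,\mathbf{q}_m)}V(f-f(t_m,\mathbf{q}_m))$ is the kernel of $df(t_m,\mathbf{q}_m)=(\partial_t f(t_m,\mathbf{q}_m),\,0,\ldots,0)$, hence it contains the full $\mathbb{C}^n$-direction $\{0\}\times\mathbb{C}^n$; in particular (whether or not $\partial_t f$ vanishes there) any limit $T$ of these tangent spaces along a convergent subsequence contains $\{0\}\times\mathbb{C}^n$. By Thom's $a_f$ condition applied at $(0,\mathbf{q})\in S$ we conclude $T_{(0,\mathbf{q})}S\subseteq T$. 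This is not yet an absurdity, so the argument must be localized further: I would instead run the curve-selection lemma. The semialgebraic (indeed, real-analytic) set of pairs $\big((t,\mathbf{z}),\eta\big)$ with $f_t(\mathbf{z})=\eta$, $\eta\neq 0$, $d_{\mathbf{z}}f_t(\mathbf{z})=0$, and $(t,\mathbf{z})$ in the closed neighbourhood, accumulates at $(0,\mathbf{q})$; by curve selection there is a real-analytic arc $\rho(s)=(t(s),\mathbf{z}(s))$, $s\in[0,\varepsilon)$, with $\rho(0)=(0,\mathbf{q})$, $\rho(s)$ a $\mathbf{z}$-critical point of $f_{t(s)}$ for $s>0$, and $f_{t(s)}(\mathbf{z}(s))\neq 0$. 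Along such an arc I would estimate $\tfrac{d}{ds}f(\rho(s))$: since $d_{\mathbf{z}}f$ vanishes on the arc, $\tfrac{d}{ds}f(\rho(s))=\partial_t f(\rho(s))\cdot t'(s)$, and one shows this is $o\big(|f(\rho(s))|/s\big)$ (or handle $t(s)\equiv 0$ separately, which is exactly the classical statement that the generic fibre of $f_0$ is smooth near $\mathbf{0}$, following from non-degeneracy), forcing $f(\rho(s))\to 0$ too fast — contradicting that $f\circ\rho$ is a nonzero real-analytic function vanishing at $s=0$, once the $\mathfrak{a}_f$ estimate $\|df(x)\|\gtrsim |f(x)|/\mathrm{dist}(x,S)$ (Łojasiewicz-type inequality, which follows from the $w_f$ condition supplied by Theorem \ref{mt} via \cite{P,BMM}) is invoked.

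The cleanest route, and the one I would actually write, replaces the arc estimate by the following observation, making the uniformity transparent: by Corollary \ref{cormt}, for $t\in\mathring D_\tau$ the partition $\mathcal{S}_t$ is a Whitney $(b)$-regular stratification of $V(f_t)\cap\mathring B_r$, and by Theorem \ref{mt2} together with the Parusi\'nski--Brian\c con--Maisonobe--Merle result the pair satisfies $w_{f_t}$ \emph{with Łojasiewicz exponent bounded uniformly in $t$} — this uniformity is what must be extracted, and it is the crux. Granting it, there are constants $C>0$, $\theta\in(0,1)$, independent of $t$ and $r$ small, with $\|d_{\mathbf z}f_t(\mathbf z)\|\ge C\,|f_t(\mathbf z)|^{\theta}$ for $\mathbf z\in\mathring B_r$; this immediately forbids a $\mathbf{z}$-critical point of $f_t$ with small nonzero critical value, giving the proposition with any $\delta< (Cr^{?})$-type bound. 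I expect the \textbf{main obstacle} to be precisely this point: proving that the $w_{f_t}$ (or $a_{f_t}$) estimate holds with constants uniform in $t$. The natural fix is to work with the single function $f$ on $\mathbb{C}\times\mathbb{C}^n$ and the stratum $S^{\emptyset}(K_0)=\mathbb{C}\times\{\mathbf 0\}$ (together with the other strata meeting a neighbourhood of the origin): the $a_f$/$w_f$ property for $\mathcal{S}$ on $V(f)$, which by Theorem \ref{mt2} holds in a \emph{single} neighbourhood $\mathring D_\tau\times\mathring B_r$ of $(0,\mathbf 0)$, yields a Łojasiewicz inequality $\|df(t,\mathbf z)\|\ge C\,|f(t,\mathbf z)|^{\theta}/\mathrm{dist}\big((t,\mathbf z),|\mathcal{S}|\big)$ uniformly on that fixed neighbourhood; restricting to a slice $\{t\}\times\mathbb{C}^n$ and using $\|d_{\mathbf z}f_t\|\le\|df\|$ does \emph{not} quite work (the $t$-derivative could a priori dominate), so one must instead argue that along a putative bad sequence the $t$-derivative term is negligible — which is again the curve-selection computation above, now carried out once and for all on the fixed neighbourhood. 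I would therefore structure the final write-up as: (i) fix the neighbourhood via Theorem \ref{mt2}; (ii) assume a bad sequence, apply curve selection on $\mathbb{C}\times\mathbb{C}^n$; (iii) use $d_{\mathbf z}f=0$ on the arc to reduce $\tfrac{d}{ds}(f\circ\rho)$ to the $t$-term; (iv) bound that term using the Thom/Łojasiewicz inequality for $f$ on the fixed neighbourhood and conclude $f\circ\rho\equiv 0$, the desired contradiction.
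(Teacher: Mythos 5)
Your strategy differs genuinely from the paper's. The paper gives a direct, self-contained proof: after applying the curve selection lemma to obtain a real-analytic arc $\rho(s)=(t(s),\mathbf{z}(s))$ with $\rho(0)=(0,\mathbf{0})$ and $d_{\mathbf z}f(\rho(s))\equiv 0$, it performs an explicit Newton-polyhedron/Taylor-expansion computation of $\partial f/\partial z_i(\rho(s))$ (via the product rule, grouping the factors $f^k$ by whether $f^k_{0,\mathbf w}(\mathbf a)$ vanishes) and uses the non-degeneracy condition to force some leading coefficient to be nonzero, a contradiction. The paper explicitly remarks afterwards that \emph{only} non-degeneracy is used, not the uniform local tameness, and the proof is independent of Theorems~\ref{mt} and~\ref{mt2}. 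Your plan, by contrast, wants to deduce the proposition from Thom's $a_f$ condition (Theorem~\ref{mt2}). This alternative route is viable, but your write-up misses the clean endgame and wanders into material that is not needed.

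Two specific points. First, when you negate the statement you must let all three of $\delta,\tau,r$ shrink to zero simultaneously (e.g.\ $\delta_m=\tau_m=r_m=1/m$), which forces the bad sequence to converge to $(0,\mathbf 0)$ itself, not to some $(0,\mathbf q)$ with $\mathbf q\neq\mathbf 0$. By keeping $r$ essentially fixed you introduced the spurious limit point $(0,\mathbf q)$, correctly noted the resulting inclusion $T_{(0,\mathbf q)}S\subseteq T$ ``is not yet an absurdity'', and then drifted into Łojasiewicz/$w_f$-uniformity considerations that play no role here. Second, once the limit is at the origin, the contradiction is immediate and requires no Łojasiewicz estimate: choose the representative so that $0$ is the only critical value of $f$ on $\mathbb C\times\mathbb C^n$; along the curve-selection arc you have $d_{\mathbf z}f(\rho(s))=0$ while $\rho(s)\notin V(f)$, hence $\rho(s)\notin\Sigma f$ and $\partial_t f(\rho(s))\neq 0$, so $T_{\rho(s)}V(f-f(\rho(s)))=\ker df(\rho(s))=\{0\}\times\mathbb C^n$ for all small $s>0$. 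The limit $T$ is therefore exactly $\{0\}\times\mathbb C^n$, and Thom's $a_f$ at the stratum $S^{\emptyset}(K_0)=\mathbb C\times\{\mathbf 0\}$ would force $\mathbb C\times\{\mathbf 0\}\subseteq\{0\}\times\mathbb C^n$, which is false. The ``main obstacle'' you identify (uniformity in $t$ of a Łojasiewicz exponent) is an artifact of the wrong limit point; no such uniformity is needed because Thom's $a_f$ already holds for the full stratification of $V(f)$ in $\mathbb C\times\mathbb C^n$. The trade-off: your route (once repaired) is shorter and reuses the machinery of Theorem~\ref{mt2}, but it therefore invokes the whole Newton-admissibility hypothesis; the paper's elementary computation is longer but proves the proposition under the weaker assumption of non-degeneracy alone.
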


The proof of Proposition \ref{proposition-us} is given in Section \ref{proofofproposition-us}. In particular, this proposition implies that, in $\mathring{B}_r$, the critical locus $\Sigma f_t$ of $f_t$ is contained in $V(f_t)$ for any $\vert t\vert\leq \tau$, and hence for any $\mathbf{z}\in \mathring{B}_r\setminus \Sigma f_t$ the tangent space $T_{\mathbf{z}}V(f_t-f_t(\mathbf{z}))$ exists. We can thus state the following corollary of  Theorem \ref{mt2}.

\begin{corollary}\label{cor-mt2}
If the family $\{f_t\}_t$ is Newton-admissible, then the stratifications $\mathcal{S}_t$ of $V(f_t)$ defined in Corollary \ref{cormt} satisfy Thom's $a_{f_t}$ condition ``uniformly''  with respect to the parameter $t$, provided that the latter is small enough. More precisely, there exist an open disc $\mathring{D}_\tau\subseteq \mathbb{C}$ with radius $\tau>0$ and an open ball $\mathring{B}_r\subseteq\mathbb{C}^n$ with radius $r>0$ centred at the origins of $\mathbb{C}$ and $\mathbb{C}^n$, respectively, such that for any $t\in \mathring{D}_\tau$, any stratum $S\in \mathcal{S}_t$, any point $\mathbf{q}\in S$, and any sequence $\{\mathbf{q}_m\}_m\in \mathring{B}_r\setminus V(f_t)$ such that
\begin{equation*}
\mathbf{q}_m\to \mathbf{q}
\quad\mbox{and}\quad
T_{\mathbf{q}_m}V(f_t-f_t(\mathbf{q}_m))\to T
\end{equation*}
as $m\to\infty$, we have $T_{\mathbf{q}}S\subseteq T$.
\end{corollary}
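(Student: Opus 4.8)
The plan is to derive the corollary from the $a_f$ condition for the ambient stratification $\mathcal{S}$ (Theorem \ref{mt2}) by a slicing argument, the two ingredients that make the slicing work being the transversality of the strata of $\mathcal{S}$ to the hyperplanes $\{t\}\times\mathbb{C}^n$ (Corollary \ref{cormt}) and the uniform non-singularity of the nearby fibres (Proposition \ref{proposition-us}). First I would fix positive numbers $\delta,\tau,r$, shrinking as necessary so that simultaneously: the conclusion of Corollary \ref{cormt} holds on $\mathring{D}_\tau\times\mathring{B}_r$ (so that $\{t\}\times\mathbb{C}^n$ meets every stratum $S^I(K)$ of $\mathcal{S}$ transversally there, and $S^I_t(K)=S^I(K)\cap(\{t\}\times\mathbb{C}^n)$); the conclusion of Proposition \ref{proposition-us} holds with these $\delta,\tau,r$; the $a_f$ condition of Theorem \ref{mt2} for $\mathcal{S}$ is available on $\mathring{D}_\tau\times\mathring{B}_r$; and $|f|\leq\delta$ on $\mathring{D}_\tau\times\mathring{B}_r$.

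Now fix $t\in\mathring{D}_\tau$, a stratum $S=S^I_t(K)$ of $\mathcal{S}_t$, a point $\mathbf{q}\in S$ (so $\mathbf{q}\in\mathring{B}_r$ and $(t,\mathbf{q})\in S^I(K)$), and a sequence $\mathbf{q}_m\to\mathbf{q}$ in $\mathring{B}_r\setminus V(f_t)$ with $T_{\mathbf{q}_m}V(f_t-f_t(\mathbf{q}_m))\to T$. Set $c_m:=f_t(\mathbf{q}_m)=f(t,\mathbf{q}_m)$; then $0<|c_m|\leq\delta$, so by Proposition \ref{proposition-us} the level hypersurface $V(f_t-c_m)$ is non-singular in $\mathring{B}_r$, hence $\mathbf{q}_m$ is a regular point of $f_t$. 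Consider the lifted points $(t,\mathbf{q}_m)\in\mathbb{C}\times\mathbb{C}^n$: they lie outside $V(f)$ (as $c_m\neq0$), they converge to $(t,\mathbf{q})\in S^I(K)$, and since $\mathbf{q}_m$ is a regular point of $f_t$ the hyperplane $H_m:=T_{(t,\mathbf{q}_m)}V(f-c_m)$ is well defined, does not contain $W:=\{0\}\times\mathbb{C}^n$, and satisfies $H_m\cap W=T_{\mathbf{q}_m}V(f_t-c_m)$ (the slice $V(f-c_m)\cap(\{t\}\times\mathbb{C}^n)$ being transverse at $(t,\mathbf{q}_m)$, its tangent space there is the intersection of the two tangent spaces). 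Passing to a subsequence, I may assume $H_m\to\widetilde T$ in the Grassmannian of hyperplanes of $\mathbb{C}\times\mathbb{C}^n$.

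Applying the $a_f$ condition of $\mathcal{S}$ (Theorem \ref{mt2}) to the stratum $S^I(K)$, the point $(t,\mathbf{q})$, and the sequence $(t,\mathbf{q}_m)$---for which $f(t,\mathbf{q}_m)=c_m$ and $H_m\to\widetilde T$---yields $T_{(t,\mathbf{q})}S^I(K)\subseteq\widetilde T$. Here is the key point. By Corollary \ref{cormt}, $S^I(K)$ is transverse to $\{t\}\times\mathbb{C}^n$ at $(t,\mathbf{q})$, that is, $T_{(t,\mathbf{q})}S^I(K)\not\subseteq W$; together with $T_{(t,\mathbf{q})}S^I(K)\subseteq\widetilde T$ this forces $\widetilde T\neq W$, and since both are hyperplanes, $W\not\subseteq\widetilde T$. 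Consequently the ``intersection with $W$'' map $H\mapsto H\cap W$ is defined and continuous near $\widetilde T$, so $H_m\to\widetilde T$ gives $H_m\cap W\to\widetilde T\cap W$; but $H_m\cap W=T_{\mathbf{q}_m}V(f_t-c_m)\to T$, whence $T=\widetilde T\cap W$. Using the transversality of $S^I(K)$ to the slice once more, $T_{\mathbf{q}}S^I_t(K)=T_{(t,\mathbf{q})}S^I(K)\cap W\subseteq\widetilde T\cap W=T$, which is exactly the desired conclusion; the passage to a subsequence is harmless, since the statement $T_{\mathbf{q}}S\subseteq T$ does not involve it.

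The main obstacle is the step ruling out $W\subseteq\widetilde T$: in general the $a_f$ condition for $f$ on $\mathbb{C}\times\mathbb{C}^n$ does \emph{not} descend to the slices $f_t$, precisely because the limit hyperplane $\widetilde T$ could contain the slice direction $W$, in which case $\widetilde T\cap W$ bears no relation to $T$. What rescues the argument is that $\mathcal{S}$ is not an arbitrary $a_f$-stratification but one all of whose strata are transverse to every slice $\{t\}\times\mathbb{C}^n$ (Corollary \ref{cormt}); fed through the inclusion $T_{(t,\mathbf{q})}S^I(K)\subseteq\widetilde T$, this transversality excludes the degenerate case and makes the slicing commute with the passage to the limit. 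Uniformity in $t$ is then automatic, since $\delta,\tau,r$ were chosen once and for all independently of $t$.
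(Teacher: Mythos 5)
Your proof is correct, and it spells out the slicing argument that the paper leaves implicit when stating Corollary \ref{cor-mt2} as a consequence of Theorem \ref{mt2} and Proposition \ref{proposition-us}. The key step --- using the transversality of the strata of $\mathcal{S}$ to $\{t\}\times\mathbb{C}^n$ (Corollary \ref{cormt}), fed through the inclusion $T_{(t,\mathbf{q})}S^I(K)\subseteq\widetilde T$, to exclude $W\subseteq\widetilde T$ and thereby get $\widetilde T\cap W=T$ --- is precisely the mechanism the paper itself deploys explicitly in the proof of Theorem \ref{mt3}.
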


\section{Milnor fibrations}\label{sect-MF}

The main result of this section is Theorem \ref{mt4}. It asserts that if the family $\{f_t\}_t$ is Newton-admissible, then the Milnor fibrations of $f_t$ and $f_0$ at $\mathbf{0}$ are isomorphic. A key ingredient of the proof is Theorem \ref{mt3} which says that Newton-admissible families have a ``uniform stable radius.''

Before stating the results, let us then recall the definition of ``uniform stable radius.'' By a result of Hamm and L\^e D\~ung Tr\'ang \cite[Lemme (2.1.4)]{HL}, we know that for each $t$ there exists a positive number $r_t>0$ such that for any pair $(\varepsilon_t,\varepsilon'_t)$ with $0<\varepsilon'_t\leq \varepsilon_t\leq r_t$, there exists $\delta(\varepsilon_t,\varepsilon'_t)>0$ such that for any non-zero complex number $\eta$ with $0<\vert\eta\vert\leq\delta(\varepsilon_t,\varepsilon'_t)$, the hypersurface $V(f_t-\eta)$ is non-singular in $\mathring{B}_{r_t}$ and transversely intersects the sphere $\mathbb{S}_{\varepsilon''}$ for any $ \varepsilon''$ with $\varepsilon'_t\leq \varepsilon''\leq \varepsilon_t$. Any such a number $r_t$ is called a \emph{stable radius} for the Milnor fibration of $f_t$ at $\mathbf{0}$. Here, $\mathring{B}_{\rho}\subseteq \mathbb{C}^n$ is the open ball with radius $\rho$ centred at $\mathbf{0}$ and $\mathbb{S}_{\rho}$ denotes the boundary of the corresponding closed ball $B_{\rho}$.
Now, following \cite[\S 3]{O}, we say that the family $\{f_t\}_t$ has a \emph{uniform stable radius} if there exist $\tau > 0$ and $r > 0$ such that for any pair $(\varepsilon,\varepsilon')$ with $0<\varepsilon'\leq \varepsilon\leq r$, there exists $\delta(\varepsilon,\varepsilon')>0$ such that for any non-zero complex number $\eta$ with $0<\vert\eta\vert\leq\delta(\varepsilon,\varepsilon')$, the hypersurface $V(f_t-\eta)$ is non-singular in $\mathring{B}_{r}$ and transversely intersects the sphere $\mathbb{S}_{\varepsilon''}$ for any $ \varepsilon''$ with $\varepsilon'\leq \varepsilon''\leq \varepsilon$ and for any $t$ in the closed disc $D_\tau:=\{t\in\mathbb{C}\mid 0\leq \vert t\vert \leq\tau\}$.
Any such a number $r$ is called a \emph{uniform stable radius} for $\{f_t\}_t$. 

\begin{theorem}\label{mt3}
If the family $\{f_t\}_t$ is Newton-admissible, then it has a uniform stable radius.
\end{theorem}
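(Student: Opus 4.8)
The plan is to deduce the existence of a uniform stable radius from the Whitney $(b)$-regularity of the canonical toric stratification $\mathcal{S}$ of $V(f)$ (Theorem \ref{mt}) together with Thom's $a_f$ condition (Theorem \ref{mt2}) and the uniform non-singularity of the nearby fibres (Proposition \ref{proposition-us}). First I would invoke Proposition \ref{proposition-us} to fix $\delta_0,\tau_0,r_0>0$ such that $V(f_t-\eta)$ is non-singular in $\mathring{B}_{r_0}$ for all $0<|\eta|\le\delta_0$ and $|t|\le\tau_0$; in particular $\Sigma f_t\subseteq V(f_t)$ inside $\mathring{B}_{r_0}$, so the tangent spaces $T_{\mathbf{z}}V(f_t-f_t(\mathbf{z}))$ are defined away from $V(f_t)$. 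The real work is to produce a single radius $r\le r_0$ such that, for \emph{every} $t\in D_\tau$ and every $\varepsilon''\in(0,r]$, the sphere $\mathbb{S}_{\varepsilon''}\subseteq\mathbb{C}^n$ meets $V(f_t-\eta)$ transversally for all small enough $\eta$ (depending on the chosen annulus $[\varepsilon',\varepsilon]$ but not on $t$).

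The key step is a ``uniform'' transversality-to-spheres argument, which I expect to be packaged exactly as Proposition \ref{lemma-fpofmt3} alluded to in the introduction. Consider the function $\mathbb{C}\times\mathbb{C}^n\to\mathbb{R}$, $(t,\mathbf{z})\mapsto\|\mathbf{z}\|^2$ (the distance-squared to the origin of the $\mathbb{C}^n$-factor). By Theorem \ref{mt}, $\mathcal{S}$ is a Whitney $(b)$-regular stratification of $V(f)$ having the $t$-axis $S^{\emptyset}(K_0)=\mathbb{C}\times\{\mathbf{0}\}$ as a stratum, and the $t$-axis is precisely the zero locus of $\|\mathbf{z}\|^2$. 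The standard curve-selection/Whitney argument then gives a radius $r_1>0$ and $\tau_1>0$ such that for every $(t,\mathbf{z})\in V(f)$ with $0<\|\mathbf{z}\|\le r_1$ and $|t|\le\tau_1$, the restriction of $\|\mathbf{z}\|^2$ to the stratum through $(t,\mathbf{z})$ has no critical point; equivalently, every sphere $\{t\}\times\mathbb{S}_{\varepsilon''}$ with $\varepsilon''\le r_1$ meets each stratum $S^I_t(K)$ transversally inside $\{t\}\times\mathbb{C}^n$, uniformly in $t$. The point of doing this on $V(f)\subseteq\mathbb{C}\times\mathbb{C}^n$ rather than fibre-by-fibre is that the $t$-uniformity comes for free from working with the single Whitney stratification $\mathcal{S}$ and a compact parameter disc; this is the content of the ``uniform version'' referenced as Proposition \ref{lemma-fpofmt3}.

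Next I would pass from $V(f)$ to the nearby fibres using Thom's $a_f$ condition (Theorem \ref{mt2}, or its uniform form Corollary \ref{cor-mt2}). Fix $0<\varepsilon'\le\varepsilon\le r:=\min\{r_0,r_1\}$. Suppose, for contradiction, that no uniform $\delta(\varepsilon,\varepsilon')$ works: then there are sequences $t_m\to t_\ast\in D_\tau$, $\eta_m\to 0$, and points $\mathbf{q}_m\in\mathbb{S}_{\varepsilon''_m}$ with $\varepsilon'\le\varepsilon''_m\le\varepsilon$, $f_{t_m}(\mathbf{q}_m)=\eta_m$, at which $V(f_{t_m}-\eta_m)$ fails to meet $\mathbb{S}_{\varepsilon''_m}$ transversally. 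Passing to subsequences, $\mathbf{q}_m\to\mathbf{q}_\ast\in V(f_{t_\ast})$ with $\varepsilon'\le\|\mathbf{q}_\ast\|\le\varepsilon$, $\varepsilon''_m\to\varepsilon''_\ast\in[\varepsilon',\varepsilon]$, and the tangent hyperplanes $T_{\mathbf{q}_m}V(f_{t_m}-\eta_m)$ converge to some limit $T$; the point $(t_\ast,\mathbf{q}_\ast)$ lies in some stratum $S^I_{t_\ast}(K)$ of $\mathcal{S}_{t_\ast}$, and by the uniform $a_f$ condition (Corollary \ref{cor-mt2}) we get $T_{\mathbf{q}_\ast}S^I_{t_\ast}(K)\subseteq T$. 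But by the previous paragraph $S^I_{t_\ast}(K)$ is transverse to $\mathbb{S}_{\varepsilon''_\ast}$ at $\mathbf{q}_\ast$ inside $\mathbb{C}^n$, i.e. $T_{\mathbf{q}_\ast}S^I_{t_\ast}(K)+T_{\mathbf{q}_\ast}\mathbb{S}_{\varepsilon''_\ast}=\mathbb{C}^n$; since $T\supseteq T_{\mathbf{q}_\ast}S^I_{t_\ast}(K)$ and $\dim_{\mathbb{R}}T=2n-1$, a dimension count forces $T+T_{\mathbf{q}_\ast}\mathbb{S}_{\varepsilon''_\ast}=\mathbb{C}^n$, contradicting the non-transversality of $V(f_{t_m}-\eta_m)$ and $\mathbb{S}_{\varepsilon''_m}$ in the limit (here one must check that non-transversality is a closed condition under the chosen convergence, which is routine once $T$ is the limiting tangent hyperplane). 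Combined with the non-singularity of $V(f_t-\eta)$ in $\mathring{B}_r$ from Proposition \ref{proposition-us}, this shows $r$ is a uniform stable radius for $\{f_t\}_t$.

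The main obstacle I anticipate is not any single estimate but the careful bookkeeping needed to make the uniformity in $t$ genuinely come out of a \emph{single} Whitney stratification: one has to know that $\mathcal{S}$ restricted to the compact polydisc $D_\tau\times B_r$ has only finitely many strata meeting the $t$-axis, that the transversality of spheres to strata holds on a neighbourhood of the $t$-axis of the product (not merely fibrewise with fibre-dependent radius), and that the limiting-tangent-space compactness arguments may all be run simultaneously for varying $t$. This is exactly why the proof is organised around $V(f)\subseteq\mathbb{C}\times\mathbb{C}^n$ and the auxiliary Proposition \ref{lemma-fpofmt3} rather than around the individual fibres; once that uniform transversality lemma is in place, the deduction above is a standard Thom–$a_f$ limiting argument.
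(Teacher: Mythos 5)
Your overall strategy is the same as the paper's: argue by contradiction, extract a convergent subsequence of non-transversality points, take limits of tangent spaces, invoke Thom's $a_f$ condition to bound the limit from below by a stratum tangent space, and contradict the uniform transversality-to-spheres statement (Proposition~\ref{lemma-fpofmt3}). However, there is a genuine gap at the key step where you invoke Corollary~\ref{cor-mt2}. That corollary is stated for a \emph{fixed} $t\in\mathring{D}_\tau$ and a sequence $\{\mathbf{q}_m\}_m\subseteq\mathring{B}_r\setminus V(f_t)$ at that fixed $t$; it does \emph{not} apply to your sequence $(t_m,\mathbf{q}_m)$ in which $t_m$ varies. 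Consequently the inclusion $T_{\mathbf{q}_\ast}S^I_{t_\ast}(K)\subseteq T$ (with $T=\lim_m T_{\mathbf{q}_m}V(f_{t_m}-\eta_m)$) is not justified by what you cite.

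The paper fills this gap by working in the ambient $\mathbb{C}\times\mathbb{C}^n$: it considers the limit $T':=\lim_m T_{(t_m,\mathbf{q}_m)}V(f-\eta_m)$, applies Theorem~\ref{mt2} (Thom's $a_f$ for the single stratification $\mathcal{S}$ of $V(f)$, which does allow $t$ to vary) to get $T_{(t_\ast,\mathbf{q}_\ast)}S_{00}\subseteq T'$, and then shows that $\{t_\ast\}\times T=(\{t_\ast\}\times\mathbb{C}^n)\cap T'$. That last identity is where the real work is: one first needs the transversality of $\{t_\ast\}\times\mathbb{C}^n$ with $T'$, which is deduced from the transversality of $\{t_\ast\}\times\mathbb{C}^n$ with the stratum $S_{00}$. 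Only then does the chain $T_{\mathbf{q}_\ast}S_0\subseteq T\subseteq T_{\mathbf{q}_\ast}\mathbb{S}_{\|\mathbf{q}_\ast\|}$ and the contradiction with Proposition~\ref{lemma-fpofmt3} go through. This slicing step is precisely the ``careful bookkeeping'' you correctly anticipate at the end of your proposal but do not actually carry out. As a minor point, $\dim_{\mathbb{R}}T=2n-2$ (it is a limit of complex hyperplanes), not $2n-1$; but you do not in fact need a dimension count\textemdash once $T_{\mathbf{q}_\ast}S^I_{t_\ast}(K)\subseteq T\subseteq T_{\mathbf{q}_\ast}\mathbb{S}_{\varepsilon''_\ast}$, the non-transversality of $S^I_{t_\ast}(K)$ with the sphere is immediate.
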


Theorem \ref{mt3} is proved in Section \ref{proofmt3}. The proof is based on the following proposition (proved in Section \ref{proofprop-utr}) which is interesting itself.
It is well known that for a given $t$, small spheres are transverse to the strata of $\mathcal{S}_t$. The proposition says that, under the Newton-admissibility condition,  this property holds ``uniformly'' in $t$. The precise statement is as follows.

\begin{proposition}\label{lemma-fpofmt3}
If the family $\{f_t\}_t$ is Newton-admissible, then there exists a neighbourhood $\mathring{D}_\tau\times\mathring{B}_r$ of the origin of $\mathbb{C}\times \mathbb{C}^n$ such that for any $0<r'\leq r$ and any $t\in\mathring{D}_\tau$, the sphere $\{t\}\times \mathbb{S}_{r'}$ transversely intersects $(\{t\}\times\mathbb{C}^n)\cap S$ for any stratum $S\in\mathcal{S}$, where $\mathcal{S}$ is the stratification constructed in Theorem \ref{mt} and $\mathbb{S}_{r'}$ is the sphere with radius $r'$ centred at the origin of $\mathbb{C}^n$.
\end{proposition}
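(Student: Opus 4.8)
The goal is to prove that, for a Newton-admissible family, small spheres centered at the origin of $\mathbb{C}^n$ are transverse to the strata of the toric stratification $\mathcal{S}$, and moreover that this holds \emph{uniformly} in the deformation parameter $t$. The natural approach is by \emph{contradiction via the curve selection lemma}. Suppose no such neighbourhood $\mathring{D}_\tau \times \mathring{B}_r$ exists. Then one can find a sequence of points $(t_m, \mathbf{z}_m)$ converging to the origin, each lying on some stratum $S = S^I(K)$, with $\|\mathbf{z}_m\|\to 0$, at which the sphere $\{t_m\}\times\mathbb{S}_{\|\mathbf{z}_m\|}$ fails to intersect $(\{t_m\}\times\mathbb{C}^n)\cap S$ transversely. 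Since there are only finitely many pairs $(I,K)$, pass to a subsequence so that the offending stratum is a fixed $S^I(K)$. Non-transversality at $(t_m,\mathbf{z}_m)$ means that the vector $\mathbf{z}_m$ (the gradient of $\|\mathbf{z}\|^2$) is, up to a bounded error, contained in the tangent space $T_{(t_m,\mathbf{z}_m)}(\{t_m\}\times\mathbb{C}^n)\cap S^I(K)$; equivalently, the real part of the Hermitian pairing of $\bar{\mathbf{z}}_m$ against each of the defining complex gradients $\nabla_{\mathbf{z}} f^k_{t_m}(\mathbf{z}_m)$, $k\in K$ with $f^k|_{\mathbb{C}\times\mathbb{C}^I}\not\equiv 0$, can be made small. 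Apply the curve selection lemma in the (semianalytic) set of such ``bad'' configurations to obtain a real analytic arc $(t(s),\mathbf{z}(s))$, $s\in[0,\sigma)$, with $(t(0),\mathbf{z}(0))=(0,\mathbf{0})$, $\mathbf{z}(s)\in\mathbb{C}^{*I}$ for $s>0$, lying in $S^I(K)$, along which $\mathbf{z}(s)$ is (asymptotically, in real terms) tangent to the level hypersurfaces of the $f^k_{t(s)}$.

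**Reduction to face functions.** The next step is to take the dominant-term (Puiseux) expansion of the arc: write $z_i(s) = a_i s^{w_i} + (\text{higher order})$ for a weight vector $\mathbf{w}=(w_1,\dots,w_n)$ with $w_i=\infty$ exactly when $z_i\equiv 0$, i.e. exactly when $i\notin I$, so that $I(\mathbf{w}) = \{1,\dots,n\}\setminus I$ in the finite-weight directions and the zero/positive structure of $\mathbf{w}$ records the subspace. The substitution $\mathbf{z}=\mathbf{z}(s)$ into $f^k_{t(s)}$ picks out the face function $f^k_{t,\mathbf{w}}$, and since $\Gamma(f^k_t)$ is independent of $t$ we may also expand $t(s)=b s^{w_0}$ (or $t\equiv 0$) and treat the $t$-dependence as a bounded perturbation. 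The key point is that, after dividing by the appropriate power of $s$, the asymptotic non-transversality relation becomes a relation saying that the vector $(\bar a_i)_{i\in I}$ (suitably weighted) lies in the span of the restricted gradients $d f^k_{t,\mathbf{w}}$ evaluated at the point $\mathbf{a}=(a_i)_{i\in I}\in\mathbb{C}^{*I}$, with the variables $(z_i)_{i\in I(\mathbf{w})}$ fixed at the values given by the arc — and these fixed values have modulus bounded by the constant $R$ of uniform local tameness, because $\|\mathbf{z}(s)\|\to 0$. Here one uses the standard weighted Euler identity to rewrite the radial vector $\bar{\mathbf{z}}$ in terms of $d\ell_{\mathbf{w}}$ and to conclude that the face functions, restricted to the relevant torus slice, would have a critical point — contradicting the non-degeneracy / local tameness of $V(f^{k_1}_t,\dots,f^{k_p}_t)$ furnished by Newton-admissibility, where $\{k_1,\dots,k_p\}$ is the set of indices $k\in K$ with $f^k|_{\mathbb{C}\times\mathbb{C}^I}\not\equiv 0$.

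**The main obstacle.** The delicate point — and exactly the new difficulty flagged in the introduction — is that in the multi-component case $k_0>1$ the $1$-forms $df^{k_1}_{t,\mathbf{w}},\dots,df^{k_p}_{t,\mathbf{w}}$ obtained as limits along the arc may become linearly dependent, even though the original forms $df^{k_j}$ are independent on the strata. Thus the ``critical point'' one extracts is not automatically a critical point in the complete-intersection sense, and the uniform local tameness hypothesis (which is exactly a statement about $df^{k_1}_{t,\mathbf{w}}\wedge\cdots\wedge df^{k_p}_{t,\mathbf{w}}$ being nowhere zero on the torus slice) must be applied with care: one must first pass to a maximal subset of indices for which the limiting forms remain independent, re-examine which coordinate subspace $\mathbb{C}^{*I'}$ (with $I'\supseteq I$) the arc actually lives in asymptotically, and invoke non-degeneracy / local tameness for \emph{that} subfamily. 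Organizing this bookkeeping — choosing the right subset of the $f^k$'s and the right face, and verifying that the fixed coordinates stay inside the common radius $R$ — is where the real work lies; the rest is the routine weighted-homogeneity computation. The uniformity in $t$ comes for free from this argument since $R$ was chosen independent of $t$, and the arc's endpoint being the origin forces $|t(s)|\to 0$ and all fixed coordinates into the ball of radius $R$ simultaneously.
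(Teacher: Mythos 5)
Your proposal takes a fundamentally different route from the paper's, and it contains a genuine gap.

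The paper's proof is a short, soft dimension count that uses Theorem \ref{mt} directly: once $\mathcal{S}$ is known to be Whitney $(b)$-regular (the content of Theorem \ref{mt}), no further appeal to Newton-admissibility, curve selection, or face functions is needed. Suppose the assertion failed along a sequence $(t_m,\mathbf{z}_m)\to(t_0,\mathbf{0})$ with all points in a fixed stratum $S=S^I(K)$; pass to a subsequence so that the secant lines through $\mathbf{z}_m$ and $\mathbf{0}$ converge to some $\ell$, and so that $T := \lim_m T_{(t_m,\mathbf{z}_m)}\bigl((\{t_m\}\times\mathbb{C}^n)\cap S\bigr)$ and $T' := \lim_m T_{(t_m,\mathbf{z}_m)}S$ exist. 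Non-transversality gives $T\subseteq\ell^\perp$, hence $\ell\not\subseteq T$. Whitney $(a)$-regularity over the $t$-axis gives $T'\supseteq T_{(t_0,\mathbf{0})}(\mathbb{C}\times\{\mathbf{0}\})$; Whitney $(b)$-regularity gives $\ell\subseteq T'$; and trivially $T'\supseteq T$. Since $\ell$ and $T$ both lie in $\{0\}\times\mathbb{C}^n$ while the $t$-axis is transverse to it, $T'$ contains a subspace of dimension $\dim T+2=\dim T'+1$, a contradiction. Uniformity in $t$ comes for free because the whole argument takes place in $\mathbb{C}\times\mathbb{C}^n$, where the $t$-axis itself is a stratum.

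Your proposal instead tries to re-derive the conclusion directly from the Newton-admissibility hypotheses via curve selection and a face-function / weighted-Euler analysis, in effect re-running the machinery of the proof of Theorem \ref{mt} from scratch. Two concrete issues. First, your formulation of non-transversality is reversed: the sphere $\{t_m\}\times\mathbb{S}_{\|\mathbf{z}_m\|}$ fails to meet the slice transversely precisely when the slice's tangent space is contained in $\mathbf{z}_m^\perp$ (equivalently, when $\mathbf{z}_m$ lies in the real span of the conjugate gradients of the active $f^k$'s), not when ``$\mathbf{z}_m$ is \dots contained in the tangent space.'' Second, and decisively, you correctly identify the central difficulty\textemdash that the limiting $1$-forms $df^{k_1}_{t,\mathbf{w}},\dots,df^{k_p}_{t,\mathbf{w}}$ along the arc may degenerate to a linearly dependent system, so that the non-vanishing of $df^{k_1}_{t,\mathbf{w}}\wedge\cdots\wedge df^{k_p}_{t,\mathbf{w}}$ cannot be invoked naively\textemdash but then you stop, stating only that organizing the requisite bookkeeping ``is where the real work lies.'' That work is precisely what Lemma \ref{lemma-fundlil} accomplishes (via the substitution $df^k\mapsto df^k+\sum_l c_{k,l}\,df^l$) in the proof of Theorem \ref{mt}; absent a comparable resolution, your argument restates the obstacle rather than overcoming it. The efficient path, once Theorem \ref{mt} is in hand, is the soft dimension-counting argument above.
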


Combined with \cite[Lemma 2]{O82}, Theorem \ref{mt3} implies the main result of this section the statement of which is as follows.

\begin{theorem}\label{mt4}
If the family $\{f_t\}_t$ is Newton-admissible, then the Milnor fibrations of $f_t$ and $f_0$ at $\mathbf{0}$ are isomorphic for all small $t$.
\end{theorem}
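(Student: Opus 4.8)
The plan is to deduce Theorem \ref{mt4} from Theorem \ref{mt3} (the uniform stable radius) together with an isotopy argument in the total space. First I would fix a uniform stable radius $r>0$ and a radius $\tau>0$ of the disc $\mathring D_\tau$ as provided by Theorem \ref{mt3}, so that for every $t\in D_\tau$ the number $r$ is a stable radius for the Milnor fibration of $f_t$ at $\mathbf{0}$ in the sense of Hamm--L\^e. Recall that once $r$ is a common stable radius, the Milnor fibration of $f_t$ can be described (via the standard equivalence, see \cite{HL}) as the restriction
\begin{equation*}
f_t \colon B_r \cap f_t^{-1}(\partial D_\delta) \longrightarrow \partial D_\delta
\end{equation*}
for a suitably small $\delta=\delta(r)>0$, and moreover this ``tube'' fibration is isomorphic to the ``sphere'' fibration $f_t/|f_t|\colon \mathbb{S}_r\setminus V(f_t)\to \mathbb{S}^1$. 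The point of having a \emph{uniform} stable radius is precisely that the same pair $(r,\delta)$ works simultaneously for all $t\in D_\tau$.

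Next I would invoke \cite[Lemma 2]{O82} (cited just before the statement) to upgrade the uniform stable radius into an actual isotopy: consider the family of maps $f_t$ parametrized by $t\in D_\tau$, restricted to the compact region $B_r\cap f^{-1}(\partial D_\delta)$. Because $r$ is a uniform stable radius, the hypersurfaces $V(f_t-\eta)$ are non-singular in $\mathring B_r$ and meet $\mathbb{S}_{r}$ transversely for all $t$ and all $0<|\eta|\le\delta$; hence the map
\begin{equation*}
\Phi\colon \bigl\{(t,\mathbf z)\in D_\tau\times B_r \mid |f_t(\mathbf z)|=\delta\bigr\}\longrightarrow D_\tau\times \partial D_\delta,\qquad (t,\mathbf z)\mapsto (t,f_t(\mathbf z)),
\end{equation*}
together with the map to $D_\tau$ recording $t$, is a proper submersion onto its image with corners coming from the boundary sphere, to which Lemma 2 of \cite{O82} (an Ehresmann-type fibration/isotopy statement in the presence of the transversality of the nearby fibres to the spheres) applies. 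This yields a trivialization over $D_\tau$ compatible with the projection to $\partial D_\delta$, i.e.\ a commutative diagram identifying the fibration $f_t\colon B_r\cap f_t^{-1}(\partial D_\delta)\to\partial D_\delta$ with $f_0\colon B_r\cap f_0^{-1}(\partial D_\delta)\to\partial D_\delta$ for every $t\in D_\tau$. Passing back to the usual model of the Milnor fibration gives the asserted isomorphism of the Milnor fibrations of $f_t$ and $f_0$ at $\mathbf{0}$.

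I expect the main obstacle to be the careful handling of the boundary and the corner structure when applying the Ehresmann/isotopy lemma: the region $B_r\cap f^{-1}(\overline{D_\delta}\setminus\{0\})$ is a manifold with corners (its boundary has two pieces, $\mathbb{S}_r\cap f_t^{-1}(\overline{D_\delta})$ and $B_r\cap f_t^{-1}(\partial D_\delta)$), and one must check that the transversality furnished by the uniform stable radius is exactly what is needed for \cite[Lemma 2]{O82} to produce a \emph{controlled} vector field that is simultaneously tangent to both boundary strata and projects correctly to the base parameters $t$ and $\eta\in\partial D_\delta$. A secondary, more routine, point is to verify that the resulting trivialization respects the fibration structure, i.e.\ that it descends to an isomorphism of the fibre bundles over $\partial D_\delta$ (equivalently, over $\mathbb{S}^1$), rather than merely a homeomorphism of total spaces; this is built into the statement of \cite[Lemma 2]{O82} but should be stated explicitly. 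Everything else—the equivalence between the tube and sphere models of the Milnor fibration, and the reduction to a fixed pair $(r,\delta)$—is standard once Theorem \ref{mt3} is in hand.
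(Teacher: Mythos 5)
Your proposal matches the paper's own argument: the paper proves Theorem \ref{mt4} in a single sentence by combining Theorem \ref{mt3} (uniform stable radius) with \cite[Lemma 2]{O82}, which is exactly the route you take. Your elaboration of the Ehresmann-type isotopy mechanism behind \cite[Lemma 2]{O82} and of the tube/sphere equivalence is a correct unpacking of what that citation encapsulates, not a different proof.
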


The rest of the paper is devoted to the proofs of Theorems \ref{mt} and \ref{mt3} (we also give a direct proof of Theorem \ref{mt2}) and to the proofs of Propositions \ref{proposition-us} and \ref{lemma-fpofmt3}.

\section{Proof of Theorem \ref{mt}}\label{sect-proofmt}

It is inspired from the proofs of \cite[Theorem (5.1)]{O}, \cite[Chap.~V, Theorem (2.8)]{O6}, \cite[Theorem 3.8]{EO} and \cite[Theorem 3.14]{EO2}. 

We shall use the following terminology. A non-singular holomorphic $k$-form $\omega$ is called \emph{decomposable} at $(t,\mathbf{z})$ if there exist linearly independent holomorphic $1$-forms $\omega_1,\ldots,\omega_{k}$ in a neighbourhood of $(t,\mathbf{z})$ such that 
\begin{equation*}
\omega(t,\mathbf{z})=\omega_1(t,\mathbf{z})\wedge\cdots\wedge\omega_k(t,\mathbf{z}). 
\end{equation*}
To such a form, we can associate an $(n+1-k)$-dimensional subspace $\omega(t,\mathbf{z})^\bot$ of the tangent space $T_{(t,\mathbf{z})}(\mathbb{C}\times \mathbb{C}^n)$ defined by
\begin{equation*}
\omega(t,\mathbf{z})^\bot := \{\mathbf{v}\in T_{(t,\mathbf{z})}(\mathbb{C}\times \mathbb{C}^n) \mid \imath_{\mathbf{v}}(\omega(t,\mathbf{z}))=0\},
\end{equation*}
where $\imath_{\mathbf{v}}(\omega(t,\mathbf{z}))$ is the inner derivative of $\omega(t,\mathbf{z})$ by $\mathbf{v}$.

\subsection{Plan of the proof}
To prove that our stratification $\mathcal{S}$ is Whitney $(b)$-regular, it suffices to show that for any $I\subseteq J\subseteq\{1,\ldots,n\}$ and any $L\subseteq K\subseteq K_0$ with $S^I(K)\cap\overline{S^J(L)}\not=\emptyset$, the ``big'' stratum $S^J(L)$ is Whitney $(b)$-regular over the ``small'' stratum $S^I(K)$ at any point 
\begin{align*}
(\tau,\mathbf{q})=(\tau,q_1,\ldots,q_n)\in S^I(K)\cap \overline{S^J(L)}
\end{align*} 
 sufficiently close to the origin. Here, $\overline{S^J(L)}$ denotes the closure of $S^J(L)$. To simplify, we shall assume:
\begin{align*}
\emptyset\not=I=\{1,\ldots,n_I\} \subseteq J= \{1,\ldots,n\}
\quad\mbox{and}\quad
L=\{1,\ldots,k_L\} \subseteq K.
\end{align*} 
(For the case $I=\emptyset$, see \S \ref{conclusion}.)
In particular, since $(\tau,\mathbf{q})\in S^I(K)$, we have $q_i\not=0$ if and only if $1\leq i\leq n_I$. 

Pick real analytic paths 
\begin{align*}
\rho(s):=(t(s),\mathbf{z}(s))
\quad\mbox{and}\quad
\rho'(s):=(t'(s),{\mathbf{z}}'(s))
\end{align*} 
such that:
\begin{enumerate}
\item
$\rho(0)=\rho'(0)=(\tau,\mathbf{q})$;
\item
$\rho'(s)\in S^I(K)$ and $\rho(s)\in S^J(L)$ for $s\not=0$.
\end{enumerate}
Write $\mathbf{z}(s)=(z_1(s),\ldots,z_n(s))$ and $\mathbf{z}'(s)=(z'_1(s),\ldots,z'_n(s))$, and look at the Taylor expansions:
\begin{align*}
& t(s)=t_0 s^{w_0}+b_0s^{w_0+1}+\cdots,\quad
z_i(s)=a_i s^{w_i}+b_i s^{w_i+1}+\cdots,\\
& t'(s)=t'_0 s^{w'_0}+b'_0 s^{w'_0+1}+\cdots, \quad
z'_i(s)=q_i +b'_i s+\cdots,
\end{align*}
where $w_i=0$ and $a_i=q_i$ for $1\leq i\leq n_I$ while $w_i>0$ and $a_i\not=0$ for $i>n_I$. The coefficients $t_0$ and $t'_0$ are also assumed to be non-zero. Note that if $\tau\not=0$, then $w_0=w'_0=0$ and $t_0=t(0)=\tau=t'(0)=t'_0$; if $\tau=0$, then $w_0,w'_0>0$ and $t(0)=\tau=t'(0)=0$. Throughout, the dots ``$\cdots$'' stand for the terms of higher degree in $s$. Note that if $i>n_I$, then $z'_i(s)=0$ for any $s$. Hereafter, we shall write $\mathbf{w}:=(w_1,\ldots,w_n)$ and $\mathbf{a}:=(a_1,\ldots,a_n)$. Put
\begin{equation*}
\ell(s):=(\ell_0(s),\ell_1(s),\ldots,\ell_n(s)),
\end{equation*}
where
\begin{equation*}
\left\{
\begin{aligned}
& \ell_0(s):=t(s)-t'(s)\\
& \ell_i(s):=z_i(s)-z'_i(s)=\left\{
\begin{aligned}
& (b_i-b'_i)s+\cdots && \mbox{ for}\quad 1\leq i\leq n_I,\\
& a_i s^{w_i}+\cdots && \mbox{ for}\quad n_I+1\leq i\leq n.
\end{aligned}
\right.
\end{aligned}
\right.
\end{equation*}
By reordering, we may suppose $w_{n_I+1}\leq w_{n_I+2}\leq \cdots\leq w_{n}$.
We shall write 
\begin{equation*}
w_{\mbox{\tiny min}}:=w_{n_I+1}=\cdots=w_{n_I+m_{I}}
\end{equation*}
($m_I\geq 1$) for the minimal value of these weights.

Let $o_\ell:=\mbox{min} \, \{\mbox{ord}\, \ell_i(s) \mid 0\leq i\leq n\}$,
where $\mbox{ord}\, \ell_i(s)$ is the order in $s$ of the $i$th component $\ell_i(s)$ of $\ell(s)$, and let
\begin{equation*}
\ell_\infty:=\lim_{s\to 0} \frac{\ell(s)}{\Vert \ell(s) \Vert} 
=\lim_{s\to 0}\frac{\ell(s)}{\vert s\vert^{o_\ell}}
\quad\mbox{and}\quad
T_\infty:=\lim_{s\to 0} T_{\rho(s)} S^J(L).
\end{equation*}
Here, $T_{\rho(s)} S^J(L)$ denotes the (complex) tangent space of $S^J(L)$ at $\rho(s)$, which is given by
\begin{align*}
T_{\rho(s)} S^J(L) & = (df^1(\rho(s))\wedge \cdots\wedge df^{k_L}(\rho(s)))^\bot\\
& = \{ \mathbf{v}\in T_{\rho(s)} (\mathbb{C}\times \mathbb{C}^n)\mid \imath_{\mathbf{v}}(df^1(\rho(s))\wedge \cdots\wedge df^{k_L}(\rho(s)))=0\}\\
& = \{ \mathbf{v}\in T_{\rho(s)} (\mathbb{C}\times \mathbb{C}^n)\mid df^k(\rho(s))(\mathbf{v})=0 \mbox{ for all } 1\leq k\leq k_L\}\\
& = \bigcap_{1\leq k\leq k_L} df^k(\rho(s))^\bot,
\end{align*}
where 
\begin{align*}
df^k(\rho(s))=\frac{\partial f^k}{\partial t}(\rho(s))\, dt + 
\sum_{i=1}^n \frac{\partial f^k}{\partial z_i}(\rho(s))\, dz_i.
\end{align*}
(This follows from \cite[Chap.~V, Lemma (2.8.2)]{O6}, which says that $S^J(L)$ is a non-singular complete intersection variety at $\rho(s)$.)

By \cite[Proposition (2.2)]{O}, to show that $S^J(L)$ is Whitney $(b)$-regular over $S^I(K)$ at the point $(\tau,\mathbf{q})$, it suffices to prove that 
\begin{equation}\label{lfwbrc}
\ell_\infty \in T_\infty.
\end{equation}
Write $o_{k_L}$ for the order in $s$ of $df^1(\rho(s))\wedge \cdots\wedge df^{k_L}(\rho(s))$, and consider
\begin{equation*}
\omega_\infty:=\lim_{s\to 0} \frac{1}{s^{o_{k_L}}} \cdot df^1(\rho(s))\wedge \cdots\wedge df^{k_L}(\rho(s)) \in\wedge^{k_L} T^*_{(\tau,\mathbf{q})}(\mathbb{C}\times \mathbb{C}^n),
\end{equation*}
where $T^*_{(\tau,\mathbf{q})}(\mathbb{C}\times \mathbb{C}^n)$ denotes the cotangent space. Clearly, 
\begin{equation*}
T_\infty=\omega_\infty^\bot, 
\end{equation*}
and to prove \eqref{lfwbrc}, we must show that  $\ell_\infty\in \omega_\infty^\bot$. 
For that purpose, we shall prove that $\omega_\infty$ is decomposable, that is, there exist linearly independent $1$-forms $\omega_1,\ldots,\omega_{k_L}$ such that $\omega_\infty=\omega_1\wedge\cdots\wedge\omega_{k_L}$. Therefore, to prove that $\ell_\infty\in\omega_\infty^\bot$, it will be enough to show that $\omega_k(\ell_\infty)=0$ for all $1\leq k\leq k_L$.
Here, the main difficulty is that, in general, if $\mathbb{C}^I$ is a vanishing coordinate subspace for some of the functions $f^k$, then the limits of the $1$-forms $df^1(\rho(s)),\ldots,df^{k_L}(\rho(s))$ as $s\to 0$ are not linearly independent. We shall solve this problem using the same subterfuge as in \cite{O,O6,O2,O3,EO2} and substituting to the corresponding differential $df^k(\rho(s))$ a term of the form
\begin{equation*}
df^k(\rho(s))+\sum_{l=1}^{k-1}c_{k,l}(s)\, df^{l}(\rho(s)),
\end{equation*}
where $c_{k,l}(s)$ are suitable polynomials.

The plan of the proof is as follows. 
First, in \S \ref{proof-para1}, using the uniform local tameness condition and the subterfuge mentioned above, we construct the forms $\omega_k$ corresponding to the functions $f^k$ having $\mathbb{C}^I$ as a vanishing coordinate subspace (i.e., $I\in\mathcal{V}_{f^k_t}$). Then, using the non-degeneracy condition, we construct the forms $\omega_k$ corresponding to the functions $f^k$ such that $I\notin\mathcal{V}_{f^k_t}$ (see \S \ref{proof-para2}). In \S \ref{synthesis}, we combine the results of \S\S \ref{proof-para1} and \ref{proof-para2} to deduce that $\omega_\infty=\omega_1\wedge\cdots\wedge\omega_{k_L}$. Finally, in \S \ref{76} (respectively, in \S \ref{77}), we show that $\omega_k(\ell_\infty)=0$ for the indices $k$ such that $I\in\mathcal{V}_{f^k_t}$ (respectively, $I\notin\mathcal{V}_{f^k_t}$).

\subsection{Construction of the forms $\omega_k$ corresponding to the functions $f^k$ such that $I\in\mathcal{V}_{f^k_t}$}\label{proof-para1}
Let $L'\subseteq L=\{1,\ldots,k_L\}$ be the subset of all indices $k$ for which $\mathbb{C}^I$ is a vanishing coordinate subspace for $f_t^k$, that is,
\begin{equation*}
L':=\{k\in L\mid I\in\mathcal{V}_{f^k_t}\}.
\end{equation*}
(We remind that $\mathcal{V}_{f_t^k}$ is independent of $t$.)
To simplify, we shall assume that $L'=\{1,\ldots,k_{L'}\}$. 
By multiplying suitable monomials in the variables $(z_i)_{i\in J\setminus I}$ to the functions $f^k_t(\mathbf{z})$ if necessary, we may also assume that 
\begin{equation*}
d(\mathbf{w};f^1_t)=\cdots=d(\mathbf{w};f^{k_{L'}}_t),
\end{equation*}
as such an operation does not change anything to the stratum $S^J(L)$. Let us denote by $d$ this common value which, by the constancy of the Newton boundary, does not depend on $t$. Then we have the following lemma. Its statement is similar to \cite[Lemma (5.11)]{O} and \cite[Chap.~V, Lemma~(2.8.19)]{O6}. However its assumptions, which are those of Theorem \ref{mt}, are different (see the comments that follow the statement of Theorem \ref{mt}). 

\begin{lemma}\label{lemma-fundlil}
After renumbering the functions $f^1,\ldots, f^{k_{L'}}$ if necessary, we can find polynomials $c_{k,l}(s)$ for $1\leq l<k\leq k_{L'}$ such that if we write
\begin{equation}\label{egalitedulemme}
df^k(\rho(s))+\sum_{l=1}^{k-1}c_{k,l}(s)\, df^{l}(\rho(s)) =
\omega_k s^{\nu_k} + \cdots,
\end{equation}
with $\omega_{k}\not=0$ for $1\leq k\leq k_{L'}$, then $\omega_1,\ldots,\omega_{k_{L'}}$ are linearly independent cotangent vectors in $T^*_{(\tau,\mathbf{q})}(\mathbb{C}\times \mathbb{C}^n)$ and $\nu_1\leq\cdots \leq \nu_{k_{L'}}\leq d-w_{\mbox{\tiny \emph{min}}}$. 
\end{lemma}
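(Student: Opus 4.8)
\textbf{Plan of the proof of Lemma \ref{lemma-fundlil}.}
The strategy is to build the cotangent vectors $\omega_k$ one at a time, in a carefully chosen order, by a Gram--Schmidt-type elimination among the leading terms of the forms $df^k(\rho(s))$, and to control the resulting orders $\nu_k$ via the local tameness hypothesis. First I would compute, for each $k\in L'$, the leading term of $df^k(\rho(s))$ along the path $\rho$. Since $I\in\mathcal{V}_{f^k_t}$ and the Newton boundary of $f^k_t$ is constant in $t$, substituting the Taylor expansions $z_i(s)=a_is^{w_i}+\cdots$ into $df^k$ shows that $df^k(\rho(s))= s^{d-w_{\mbox{\tiny min}}}\,(\text{something})+\cdots$ or of strictly higher order, where the coefficient of $s^{d-w_{\mbox{\tiny min}}}$ is expressed through the face function $f^k_{t_0,\mathbf{w}}$ evaluated at the point $\mathbf{a}$ (restricted to its $\mathbb{C}^{*I(\mathbf{w})}$-part, with the $i\in I$ coordinates set to $q_i$). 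The key point is that the $1$-jet of the path lies in the torus slice $\{\mathbf{z}\in\mathbb{C}^{*n}\mid (z_i)_{i\in I}=(q_i)_{i\in I}\}$, and $\sum_{i\in I}|q_i|^2<R$ by the uniform local tameness assumption (here we use that $(\tau,\mathbf q)$ is taken close to the origin, so its nonzero coordinates are small).

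Next I would run the elimination. Order the indices in $L'$ so that $\mathrm{ord}\,df^k(\rho(s))$ is nondecreasing; set $\omega_1$ to be the leading cotangent vector of $df^1(\rho(s))$ and $\nu_1$ its order. Inductively, having produced linearly independent $\omega_1,\dots,\omega_{j-1}$ with orders $\nu_1\le\cdots\le\nu_{j-1}$, consider $df^j(\rho(s))$; if its leading term is already independent of $\omega_1,\dots,\omega_{j-1}$ we take it as $\omega_j$. Otherwise we subtract an appropriate $\mathbb{C}$-linear combination $\sum_{l<j}c_{j,l}(s)df^l(\rho(s))$ — with $c_{j,l}(s)$ a monomial $\gamma_{j,l}s^{\mathrm{ord}\,df^j-\nu_l}$ chosen to cancel the dependent part of the leading term — and pass to the next order, repeating until an independent leading term emerges; this gives $\omega_j$ and $\nu_j\ge\mathrm{ord}\,df^j(\rho(s))\ge\nu_{j-1}$. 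The process terminates because the $\omega$'s live in the finite-dimensional space $T^*_{(\tau,\mathbf q)}(\mathbb{C}\times\mathbb{C}^n)$, and — this is the crucial input — it cannot terminate with a \emph{zero} leading form, since by the non-degeneracy plus local tameness hypothesis applied to the complete intersection $V(f^{1}_t,\dots,f^{k_{L'}}_t)$, the $k_{L'}$-form $df^1_{t_0,\mathbf{w}}\wedge\cdots\wedge df^{k_{L'}}_{t_0,\mathbf{w}}$ is nowhere vanishing on the relevant toric slice, so the leading cotangent vectors we extract stay linearly independent throughout. After possibly renumbering the $f^k$ (to make the ordering of orders compatible with the elimination), this yields \eqref{egalitedulemme} with $\omega_1,\dots,\omega_{k_{L'}}$ linearly independent.

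It remains to prove the bound $\nu_{k_{L'}}\le d-w_{\mbox{\tiny min}}$. Here I would argue as follows: the wedge $\omega_1\wedge\cdots\wedge\omega_{k_{L'}}$ is, up to a nonzero scalar, the leading term of $df^1(\rho(s))\wedge\cdots\wedge df^{k_{L'}}(\rho(s))$, whose order is $\nu_1+\cdots+\nu_{k_{L'}}$. On the other hand, this wedge of differentials along $\rho$ has order at least $\mathrm{ord}$ of each factor; but more to the point, each individual modified form in \eqref{egalitedulemme} has leading order $\nu_k$ bounded above by the order of $df^k(\rho(s))$ itself (subtracting lower-order-compatible combinations can only raise the order at worst to that of $df^k$, and the monomials $c_{k,l}(s)$ are chosen of precisely the degree that keeps things in play), and $\mathrm{ord}\,df^k(\rho(s))\le d-w_{\mbox{\tiny min}}$ because $f^k_t$ has a monomial of $\mathbf{w}$-degree $d$ supported off $\mathbb{C}^I$, whose partial derivative in a variable $z_i$ with $w_i=w_{\mbox{\tiny min}}$ contributes a term of order exactly $d-w_{\mbox{\tiny min}}$ — and this term does not get cancelled, again by non-degeneracy of the face functions on the torus. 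The non-vanishing of the relevant leading coefficient is where all the hypotheses are genuinely used, and it is also the part of the argument I expect to be the main obstacle: one has to check that after the whole elimination cascade the $s^{d-w_{\mbox{\tiny min}}}$-coefficient of the last modified form is still nonzero, i.e.\ that the linear-algebra bookkeeping is compatible with the toric non-degeneracy statement. I would model this bookkeeping closely on \cite[Lemma (5.11)]{O} and \cite[Chap.~V, Lemma (2.8.19)]{O6}, adapting the estimates so that they only invoke local tameness on the fixed-radius slice $\sum_{i\in I}|z_i|^2<R$ rather than on a $t$-dependent one.
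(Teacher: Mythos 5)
Your plan follows the same route as the paper: compute Taylor expansions of $df^k(\rho(s))$, run a Gram--Schmidt-type elimination ordered by increasing (relative) orders, and invoke uniform local tameness to supply the non-degeneracy input. However, the argument for the crucial bound $\nu_{k_{L'}}\leq d-w_{\mbox{\tiny min}}$ contains a genuine error, and the decisive mechanism of the paper's proof is asserted rather than given.

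The error: you write that ``each individual modified form in \eqref{egalitedulemme} has leading order $\nu_k$ bounded above by the order of $df^k(\rho(s))$ itself.'' This inequality goes the wrong way. The whole purpose of the elimination is to \emph{raise} the order: by definition $\nu_{k_j}$ is the relative order $\mathrm{ord}(df^{k_j}(\rho(s));\{k_1,\dots,k_{j-1}\})$, which is a maximum over all polynomial coefficients $c_l(s)$ and therefore satisfies $\nu_{k_j}\geq\mathrm{ord}\,df^{k_j}(\rho(s))$, with strict inequality precisely when cancellation occurs. So $\mathrm{ord}\,df^k(\rho(s))$ gives a lower bound on $\nu_k$, not an upper bound, and your chain of estimates collapses at this step.

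What the paper actually proves (its Claim after the first step) is that \emph{no matter which} polynomials $c_l(s)$ one subtracts, at least one of the leading coefficients
\[
\frac{\partial f^k_{\tau,\mathbf{w}}}{\partial z_i}(\mathbf{a})+\sum_l c_l(0)\,\frac{\partial f^l_{\tau,\mathbf{w}}}{\partial z_i}(\mathbf{a}),\qquad i\notin I,
\]
remains non-zero, and since that coefficient sits in front of $s^{d-w_i}$ with $w_i\geq w_{\mbox{\tiny min}}$, this forces $\nu_k\leq d-w_{\mbox{\tiny min}}$. Your proposal states that the corresponding term ``does not get cancelled, again by non-degeneracy of the face functions on the torus,'' but that conclusion is not immediate from non-degeneracy of a single $f^k$. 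The actual proof uses that the row operation $df^k\mapsto df^k+\sum_l c_l\,df^l$ leaves the full wedge $df^1(\rho(s))\wedge\cdots\wedge df^{k_{L'}}(\rho(s))$ \emph{unchanged}; one then compares the coefficient of a well-chosen $dz_{i_{0,1}}\wedge\cdots\wedge dz_{i_{0,k_{L'}}}$ (where the determinant $\det\bigl(\partial f^j_{\tau,\mathbf{w}}/\partial z_{i_{0,j'}}(\mathbf{a})\bigr)\neq 0$ by local tameness) before and after the substitution. If every $dz_i$-coefficient of the modified form with $i\notin I$ dropped to higher order, the determinant after substitution would have strictly higher order than before, a contradiction. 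It is this wedge-invariance bookkeeping — not non-degeneracy of the individual face functions — that both supplies the order bound and (via the maximality of the relative order, by the contradiction argument the paper gives in the second step) forces the resulting $\omega_{k_1},\dots,\omega_{k_{L'}}$ to be linearly independent. Your proposal gestures at the right ingredients, including the right references, but leaves this core step unsupported and in one place misstates it.
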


\begin{proof}
It is on the same pattern as the proofs of \cite[Lemma (5.11)]{O} and \cite[Chap.~V, Lemma (2.8.19)]{O6}. We divide it into two steps.

\vskip 2mm\noindent
\emph{First step.}
Take any $1\leq k\leq k_{L'}$, and write $f^k_{\tau,\mathbf{w}}$ (respectively, $f^k_{\hat{\mathbf{w}}}$) for the face function of $f^k_\tau$ (respectively, of $f^k$) with respect to the weight vector $\mathbf{w}=(w_1,\ldots,w_n)$ (respectively, the weight vector $\hat{\mathbf{w}}:=(w_0,\mathbf{w})$). 
Since $\Gamma(f^k_t)$ is independent of $t$, we have
\begin{equation}\label{f1-w}
\left\{
\begin{aligned}
& \frac{\partial f^k}{\partial t} (\rho(s)) = \frac{\partial f^k_{\hat{\mathbf{w}}}}{\partial t}(t_0,\mathbf{a})\, s^{d}+\cdots,\\
& \frac{\partial f^k}{\partial z_i} (\rho(s)) = \frac{\partial {f^k_{\tau,\mathbf{w}}}}{\partial z_i} (\mathbf{a})\,  s^{d-w_i}+\cdots,
\end{aligned}
\right.
\end{equation}
so that if $L''\subseteq L'\setminus \{k\}$ and $c_l(s)$, $l\in L''$, are arbitrary polynomials, then
\begin{equation*}
\begin{aligned}
\varphi^{k}(s) := \ & df^{k}(\rho(s))+\sum_{l\in L''}c_l(s)\, df^l(\rho(s))\\
= \ & \biggl(\biggl(\frac{\partial f^k_{\hat{\mathbf{w}}}}{\partial t}(t_0,\mathbf{a}) + \sum_{l\in L''}c_l(0)\, \frac{\partial f^l_{\hat{\mathbf{w}}}}{\partial t}(t_0,\mathbf{a})\biggr)s^d+\cdots\biggr) dt +\\
& \sum_{i=1}^n\biggl(\biggl(\frac{\partial {f^{k}_{\tau,\mathbf{w}}}}{\partial z_i} (\mathbf{a}) 
+\sum_{l\in L''}c_l(0)\, \frac{\partial {f^l_{\tau,\mathbf{w}}}}{\partial z_i} (\mathbf{a})\biggr)s^{d-w_i}+\cdots\biggr) dz_i.
\end{aligned}
\end{equation*}

\begin{claim}\label{expr-varphik1}
In the above expression, at least one of the coefficients
\begin{equation*}
\frac{\partial {f^{k}_{\tau,\mathbf{w}}}}{\partial z_i} (\mathbf{a})+\sum_{l\in L''}c_l(0)\, \frac{\partial {f^l_{\tau,\mathbf{w}}}}{\partial z_i} (\mathbf{a}), 
\end{equation*}
with $i\notin I$, is non-zero. 
\end{claim}

In particular, Claim \ref{expr-varphik1} implies that
\begin{equation*}
\nu_k:=\mbox{ord}(\varphi^{k}(s))\leq d-w_{\mbox{\tiny min}}
\quad\mbox{and}\quad
\omega_{k}:=\lim_{s\to 0}\frac{\varphi^k(s)}{s^{\nu_k}}\in T^*_{(\tau,\mathbf{q})}(\mathbb{C}\times \mathbb{C}^n).
\end{equation*}

\begin{proof}[Proof of Claim \ref{expr-varphik1}]
Since $I\in\mathcal{V}_{f_t^j}$ for all $j\in L'=\{1,\ldots,k_{L'}\}$, the uniform local tameness of the family 
\begin{equation*}
\{V(f_t^1,\ldots,f_t^{k_{L'}})\}_t,
\end{equation*}
tells us that if $(\tau,\mathbf{q})$ is close enough to the origin, then there exist indices $i_{0,1},\ldots,i_{0,k_{L'}}\notin I$ such that the determinant of the $(k_{L'}\times k_{L'})$-matrix
\begin{equation}\label{detnn}
\bigg(\frac{\partial f^j_{\tau,\mathbf{w}}}{\partial z_{i_{0,j'}}} 
(\mathbf{a})\bigg)_{1\leq j,j'\leq k_{L'}}
\end{equation}
does not vanish. Indeed, pick a positive number $R$ as in Definition \ref{maindef} and choose $(\tau,\mathbf{q})$ so small that a radius of local tameness of the set of functions $\{f_\tau^1,\ldots,f_\tau^{k_{L'}}\}$ is greater than $R$.
Since $\rho(s)\in \overline{S^J(L)}$, the expression
\begin{equation*}
f^j(\rho(s))=f^j_{\tau,\mathbf{w}}(\mathbf{a})\, s^d+\cdots
\end{equation*}
vanishes for any $s$ and any $1\leq j\leq k_{L'}$. Clearly, this implies that $\mathbf{a}$ belongs to the toric variety
\begin{equation*}
V^*(f_{\tau,\mathbf{w}}^1,\ldots,f_{\tau,\mathbf{w}}^{k_{L'}}) \cap
\mathbb{C}^{*n}(q_1,\ldots,q_{n_I}), 
\end{equation*}
and since $\mathbb{C}^I$ is a vanishing coordinate subspace for the functions $f_\tau^1,\ldots,f_\tau^{k_{L'}}$, the local tameness condition says that this variety
is a reduced, non-singular, complete intersection variety in $\mathbb{C}^{*n}(q_1,\ldots,q_{n_I})$. In particular, $k_{L'}\leq n-n_I$~and
\begin{equation*}
df^1_{\tau,\mathbf{w}}(\mathbf{a})\wedge\cdots \wedge df^{k_{L'}}_{\tau,\mathbf{w}}(\mathbf{a})\not=0.
\end{equation*}
In other words, 
\begin{equation*}
\sum_{n_I< i_1<\cdots<i_{k_{L'}}\leq n} \det \bigg(\frac{\partial f^j_{\tau,\mathbf{w}}}{\partial z_{i_{j'}}}(\mathbf{a})\bigg)_{1\leq j,j'\leq k_{L'}} \, dz_{i_1}\wedge\cdots\wedge dz_{i_{k_{L'}}}\not=0.
\end{equation*}
Therefore, there exist indices $i_{0,1},\ldots,i_{0,k_{L'}}\notin I=\{1,\ldots,n_I\}$ such that the determinant of the matrix \eqref{detnn} does not vanish as desired.

Now, since we can substitute $\varphi^{k}(s)$ to $df^{k}(\rho(s))$ in the expression 
\begin{equation*}
df^1(\rho(s))\wedge\cdots \wedge df^{k_{L'}}(\rho(s))
\end{equation*}
without changing anything, by comparing the coefficients of $dz_{i_{0,1}}\wedge\cdots\wedge dz_{i_{0,k_{L'}}}$ before and after the substitution, we get the property stated in Claim \ref{expr-varphik1}.
\end{proof}

\noindent
\emph{Second step.}
Now, pick $1\leq k_1\leq k_{L'}$ such that 
\begin{equation*}
\mbox{ord}\, df^{k_1}(\rho(s)) = 
\mbox{min}\{\mbox{ord}\, df^{k}(\rho(s))\mid 1\leq k\leq k_{L'}\}, 
\end{equation*}
and choose inductively  $1\leq k_j\leq k_{L'}$, $k_j\not=k_1,\ldots,k_{j-1}$, together with polynomials $c_{k_j,1}(s),\ldots,c_{k_j,j-1}(s)$
such that the order $\nu_{k_j}$ of
\begin{equation}\label{proof-notpkj}
\varphi^{k_j}(s) := df^{k_j}(\rho(s))+\sum_{l=1}^{j-1}c_{k_j,l}(s)\, df^{k_l}(\rho(s))
\end{equation}
is equal to the relative order\footnote{The relative order, $\mbox{ord}(df^{k_j}(\rho(s));\{k_1,\ldots,k_{j-1}\})$, of $df^{k_j}(\rho(s))$ modulo $\{k_1,\ldots,k_{j-1}\}$ is defined by
\begin{equation*}
\mbox{max} \biggl\{\mbox{ord} \biggl(df^{k_j}(\rho(s))+\sum_{l=1}^{j-1}c_l(s)\, df^{k_l}(\rho(s))\biggr)\mid c_1(s),\ldots,c_{j-1}(s)\in\mathbb{C}[s] \biggr\}.
\end{equation*}} 
$\mbox{ord}(df^{k_j}(\rho(s));\{k_1,\ldots,k_{j-1}\})$ and 
\begin{align*}
& \mbox{ord}(df^{k_j}(\rho(s));\{k_1,\ldots,k_{j-1}\})=\\
& \mbox{min} \{\mbox{ord}(df^{k}(\rho(s));\{k_1,\ldots,k_{j-1}\})
\mid k\not=k_1,\ldots,k_{j-1}\}.
\end{align*} 
Then, as $\varphi^{k_j}(s)=\omega_{k_j} s^{\nu_{k_j}}+\cdots$, we have
$\nu_{k_1}\leq\cdots\leq \nu_{k_{k_{L'}}}\leq d-w_{\mbox{\tiny min}}$
and $\omega_{k_1},\ldots,\omega_{k_{k_{L'}}}$ are linearly independent cotangent vectors in $T^*_{(\tau,\mathbf{q})}(\mathbb{C}\times \mathbb{C}^n)$. 
The last assertion can be proved by induction as follows. Let us assume that $\omega_{k_1},\ldots,\omega_{k_{j-1}}$ are linearly independent while $\omega_{k_1},\ldots,\omega_{k_j}$ are not. Then there are constants $c_{k_1},\ldots, c_{k_{j-1}}$ such that $\omega_{k_j}=\sum_{l=1}^{j-1} c_{k_l}\omega_{k_l}$. It follows that the order of
\begin{align*}
\psi^{k_j}(s) & :=\varphi^{k_j}(s)-\sum_{l=1}^{j-1} c_{k_l} s^{\nu_{k_j}-\nu_{k_l}}\varphi^{k_l}(s)\\
& = \bigg(\bigg(\sum_{l=1}^{j-1} c_{k_l}\omega_{k_l}\bigg)s^{\nu_{k_j}} + \cdots\bigg) - 
\sum_{l=1}^{j-1}c_{k_l} s^{\nu_{k_j}-\nu_{k_l}}(\omega_{k_l} s^{\nu_{k_l}}+\cdots)
\end{align*}
is greater than $\nu_{k_j}$ and $\psi^{k_j}(s)$ is the sum of $df^{k_j}(\rho(s))$ with a linear combination (whose coefficients are polynomials in $s$) of $df^{k_1}(\rho(s)),\ldots,df^{k_{j-1}}(\rho(s))$.
It follows that
\begin{align*}
\nu_{k_j}<\mbox{ord}\, \psi^{k_j}(s)\leq \mbox{ord}(df^{k_j}(\rho(s));\{k_1,\ldots,k_{j-1}\})=\nu_{k_j}, 
\end{align*}
which is a contradiction.
This completes the proof of Lemma \ref{lemma-fundlil}.
\end{proof}

\subsection{Construction of the forms $\omega_k$ corresponding to the functions $f^k$ such that $I\notin\mathcal{V}_{f^k_t}$}\label{proof-para2}
Let us now consider the set $L\setminus L'=\{k_{L'}+1,\ldots,k_L\}$ of indices $k$ such that $I\notin\mathcal{V}_{f^k_t}$. Then we have the following lemma.

\begin{lemma}\label{lemma-fundlil2}
For any $k_{L'}+1\leq k\leq k_{L}$, let us write
\begin{equation*}
df^k(\rho(s)) = \omega_k s^{\nu_k} + \cdots,
\end{equation*}
where $\omega_{k}\not=0$. Then $\nu_k=0$ and $\omega_{k_{L'}+1},\ldots,\omega_{k_{L}}$ are linearly independent cotangent vectors in $T^*_{(\tau,\mathbf{q})}(\mathbb{C}\times\mathbb{C}^n)$.
\end{lemma}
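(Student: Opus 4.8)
The plan is to compute the leading term of $df^k(\rho(s))$ for $k\in L\setminus L'$ directly, using the fact that $I\notin\mathcal{V}_{f^k_t}$ forces a pure (i.e., monomial-free in the vanishing variables) lowest-weight term to survive at $s^0$. First I would look at the Taylor expansions already set up: since $z_i(s)=a_is^{w_i}+\cdots$ with $w_i=0$ and $a_i=q_i\neq 0$ for $i\leq n_I$, and $w_i>0$, $a_i\neq 0$ for $i>n_I$, we have $\rho(0)=(\tau,\mathbf{q})$ with $\mathbf{q}\in\mathbb{C}^{*I}$. Because $f^k\vert_{\mathbb{C}\times\mathbb{C}^I}\not\equiv 0$, the face function $f^k_{\tau,\mathbf{w}}$ restricted to the slice $\mathbb{C}^{*n}(q_1,\dots,q_{n_I})$ is not identically zero; more to the point, $f^k$ evaluated along $\rho(s)$ has a term of order $s^0$ coming from the monomials of $f^k$ supported on $\mathbb{C}^I$, so the Newton boundary stability gives
\begin{equation*}
\frac{\partial f^k}{\partial z_i}(\rho(s)) = \frac{\partial f^k_{\tau,\mathbf{w}}}{\partial z_i}(\mathbf{a})\, s^{d(\mathbf{w};f^k_\tau)-w_i}+\cdots,
\qquad
\frac{\partial f^k}{\partial t}(\rho(s)) = \frac{\partial f^k_{\hat{\mathbf{w}}}}{\partial t}(t_0,\mathbf{a})\, s^{d(\mathbf{w};f^k_\tau)}+\cdots,
\end{equation*}
exactly as in \eqref{f1-w}. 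Since $I\notin\mathcal{V}_{f^k_t}$ means some monomial of $f^k$ lies in $\mathbb{C}^I$, we have $d(\mathbf{w};f^k_\tau)=0$ and the corresponding $i\in I$ contributes $\partial f^k_{\tau,\mathbf{w}}/\partial z_i(\mathbf{a})\,s^{-w_i}=\partial f^k_{\tau,\mathbf{w}}/\partial z_i(\mathbf{a})\,s^0$; together with the non-degeneracy of $V(f^k_\tau)$ (which says $df^k_{\tau,\mathbf{w}}$ is nowhere zero on $V^*(f^k_{\tau,\mathbf{w}})$), this forces $\nu_k=\mathrm{ord}\,df^k(\rho(s))=0$ and identifies $\omega_k=\lim_{s\to 0}df^k(\rho(s))=df^k(\tau,\mathbf{q})$.

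Having pinned down $\omega_k=df^k(\tau,\mathbf{q})$ for each $k\in L\setminus L'$, the remaining task is linear independence of $\omega_{k_{L'}+1},\dots,\omega_{k_L}$. For this I would invoke \cite[Chap.~V, Lemma (2.8.2)]{O6} at the point $(\tau,\mathbf{q})\in\mathbb{C}\times\mathbb{C}^{*I}$: the subset $\bigcap_{k\in L\setminus L'}V(f^k)\cap(\mathbb{C}\times\mathbb{C}^{*I})$ is a non-singular complete intersection variety there (since each such $f^k$ does not vanish identically on $\mathbb{C}\times\mathbb{C}^I$), which is precisely the assertion that $df^{k_{L'}+1}(\tau,\mathbf{q})\wedge\cdots\wedge df^{k_L}(\tau,\mathbf{q})\neq 0$, i.e., that these $1$-forms are linearly independent. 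Alternatively, one can argue directly from non-degeneracy of the complete intersection $V(f^{k_{L'}+1}_\tau,\dots,f^{k_L}_\tau)$ applied with the weight vector $\mathbf{w}$ and the fixed coordinates $(q_i)_{i\in I}$, exactly as in the First Step of Lemma \ref{lemma-fundlil}, to produce a nonvanishing maximal minor of $(\partial f^k_{\tau,\mathbf{w}}/\partial z_i(\mathbf{a}))$ with column indices outside $I$; but the cleaner route is the cited lemma of \cite{O6}.

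The main obstacle, such as it is, is bookkeeping: one must be careful that ``$d(\mathbf{w};f^k_\tau)=0$'' really follows from $I\notin\mathcal{V}_{f^k_t}$ — concretely, that the minimum of $\sum w_ix_i$ over $\Gamma_+(f^k_\tau)$ is attained at (and only at, up to the essential face) the exponents supported in $I$, so that the face function $f^k_{\tau,\mathbf{w}}$ is genuinely a function of the variables $(z_i)_{i\in I}$ and its value at $\mathbf{a}$ depends only on $\mathbf{q}$. Once that is granted, the substitution subterfuge of Lemma \ref{lemma-fundlil} is \emph{not} needed here — the forms $df^k(\rho(s))$ already have order $0$ and independent limits — which is why this lemma is short compared to its predecessor. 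I expect no serious difficulty beyond making the Newton-diagram statement precise and checking the hypotheses of \cite[Chap.~V, Lemma (2.8.2)]{O6} are met at $(\tau,\mathbf{q})$.
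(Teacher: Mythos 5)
Your proof is correct and follows essentially the same route as the paper: observe $d(\mathbf{w};f^k_\tau)=0$ so that the face function $f^k_{\tau,\mathbf{w}}=f^k_\tau\vert_{\mathbb{C}^I}$ lives on $\mathbb{C}^I$, conclude from non-degeneracy that $\nu_k=0$, and then use the non-degeneracy of the complete intersection to get linear independence. The paper spells out the determinant of the matrix $\bigl(\partial f^k_{\tau,\mathbf{w}}/\partial z_{i_{0,k'}}(\mathbf{q})\bigr)$ with columns in $I$ explicitly, whereas your primary route cites \cite[Chap.~V, Lemma (2.8.2)]{O6} to get the non-singularity of $\bigcap_{k}V^{*I}(f^k)$ at $(\tau,\mathbf{q})$ directly; that is the same underlying fact packaged differently, and you give the determinant computation as your stated alternative anyway, so the two proofs coincide in substance.
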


\begin{proof}
For simplicity, we shall write $f_\tau^{k,I}:=f_\tau^k\vert_{\mathbb{C}^I}$.
Since $I\notin\mathcal{V}_{f_\tau^k}$ for any $k_{L'}+1\leq k\leq k_{L}$, we have $d(\mathbf{w};f^k_\tau)=0$ and $f^k_{\tau,\mathbf{w}}=f_\tau^{k,I}$. Moreover, since $\rho(s)\in \overline{S^J(L)}$, the expression
\begin{equation*}
f^k(\rho(s))=f^k_{\tau,\mathbf{w}}(\mathbf{q})\, s^0+\cdots
\end{equation*}
vanishes for any $s$ and any $k_{L'}+1\leq k\leq k_{L}$, so that $\mathbf{q}\in V(f^k_{\tau,\mathbf{w}})$ for all $k_{L'}+1\leq k\leq k_{L}$.
The non-degeneracy condition says that by shrinking $(\tau,\mathbf{q})$ one more time if necessary, the variety 
\begin{equation*}
V(f^{k_{L'}+1}_\tau,\ldots,f^{k_{L}}_\tau)
\end{equation*} 
is a non-degenerate complete intersection variety, and hence, 
\begin{equation*}
df^{k_{L'}+1}_{\tau,\mathbf{w}}\wedge\cdots \wedge df^{k_{L}}_{\tau,\mathbf{w}} 
\end{equation*}
is nowhere vanishing in $V^*(f^{k_{L'}+1}_{\tau,\mathbf{w}},\ldots,f^{k_{L}}_{\tau,\mathbf{w}})$.
Since  for $k_{L'}+1\leq k\leq k_{L}$, the functions $f^k_{\tau,\mathbf{w}}$ involves only the variables $z_1,\ldots,z_{n_I}$, it follows that $k_L-k_{L'}\leq n_I$ and there are indices $i_{0,k_{L'}+1},\ldots,i_{0,k_{L}}\in I$ such that the determinant of the $((k_L-k_{L'})\times(k_L-k_{L'}))$-matrix
\begin{equation*}
\bigg(\frac{\partial f^{k}_{\tau,\mathbf{w}}}{\partial z_{i_{0,k'}}}(\mathbf{q})\bigg)_{k_{L'}+1\leq k,k'\leq k_L}
\end{equation*}
does not vanish. This implies that $\nu_k=0$. It also implies that $\omega_{k_{L'}+1}\wedge\cdots\wedge\omega_{k_{L}}$ has a non-zero coefficient in $dz_{i_{0,k_{L'}+1}}\wedge\cdots\wedge dz_{i_{0,k_{L}}}$, and hence $\omega_{k_{L'}+1},\ldots,\omega_{k_{L}}$ are linearly independent.
\end{proof}

\subsection{Consequence of Lemmas \ref{lemma-fundlil} and \ref{lemma-fundlil2}}\label{synthesis}
It follows from these lemmas that 
$\omega_{1}\wedge\cdots\wedge\omega_{k_{L'}}\wedge\omega_{k_{L'}+1}\wedge\cdots\wedge\omega_{k_{L}}\not=0$.
Moreover, since
\begin{equation*}
\bigwedge_{k=1}^{k_L}df^k(\rho(s)) = \bigwedge_{k=1}^{k_{L'}}\varphi^k(s) \wedge \bigwedge_{k=k_{L'}+1}^{k_L}df^k(\rho(s)),
\end{equation*}
where $\varphi^k(s)$ is defined in \eqref{proof-notpkj}, we deduce that $o_{k_L}=\nu_1+\cdots +\nu_{k_L}$ and $\omega_\infty$ is decomposable equal to $\omega_1\wedge \cdots \wedge \omega_{k_L}$. Thus, as announced above, to prove that $\ell_\infty\in\omega_\infty^\bot$, it suffices to show that $\omega_k(\ell_\infty)=0$ for all $1\leq k\leq k_L$. Indeed, $\imath_{\ell_\infty}(\omega_\infty)=\imath_{\ell_\infty}(\omega_1\wedge \cdots \wedge \omega_{k_L})=0$ if and only if
\begin{equation*}
\sum_{k=1}^{k_L} (-1)^{k-1}\omega_k(\ell_\infty) \, 
\omega_1\wedge \cdots \wedge \omega_{k-1} \wedge 
\omega_{k+1}\wedge \cdots \wedge \omega_{k_L} = 0.
\end{equation*}
So, if $\omega_k(\ell_\infty)=0$ for all $1\leq k\leq k_L$, then $\imath_{\ell_\infty}(\omega_\infty)=0$, that is, $\ell_\infty\in\omega_\infty^\bot$.

\subsection{Proof of the equality $\omega_k(\ell_\infty)=0$ for $1\leq k\leq k_{L'}$}\label{76}
By definition, if we put 
\begin{equation*}
\omega_k:=\omega_{k,0}\, dt + \sum_{i=1}^n\omega_{k,i}\, dz_i, 
\end{equation*}
then \eqref{egalitedulemme} is written as
\begin{equation*}
df^k(\rho(s))+\sum_{l=1}^{k-1}c_{k,l}(s)\, df^{l}(\rho(s)) =
\bigg(\omega_{k,0}\, dt + \sum_{i=1}^n\omega_{k,i}\, dz_i\bigg) s^{\nu_k}+\cdots.
\end{equation*}
Applying both sides of this equality to the canonical basis of $\mathbb{C}\times\mathbb{C}^n$ shows that
\begin{equation}\label{lir}
\left\{
\begin{aligned}
& \bigg(\frac{\partial {f^k_{\hat{\mathbf{w}}}}}{\partial t}(t_0,\mathbf{a}) + \sum_{l=1}^{k-1}c_{k,l}(0)\, \frac{\partial {f^l_{\hat{\mathbf{w}}}}}{\partial t}(t_0,\mathbf{a})\bigg)s^{d} +\cdots=\omega_{k,0}s^{\nu_k}+\cdots,\\
& \bigg(\frac{\partial f^k_{\tau,\mathbf{w}}}{\partial z_i}(\mathbf{a}) + \sum_{l=1}^{k-1}c_{k,l}(0)\, \frac{\partial f^l_{\tau,\mathbf{w}}}{\partial z_i}(\mathbf{a})\bigg)s^{d-w_i} +\cdots=\omega_{k,i}s^{\nu_k}+\cdots.
\end{aligned}
\right.
\end{equation}
As $\nu_k\leq d-w_{\mbox{\tiny min}}\leq d-1$ (see Lemma \ref{lemma-fundlil}), we must have $\omega_{k,0}=0$. We must also have $\omega_{k,i}=0$ for $1\leq i\leq n_I$ as $w_i=0$ within this range of indices.

\begin{remark}\label{remcrucial}
This implies $\lim_{s\to 0}T_{\rho(s)}V^{*J}(f^k)\supseteq \mathbb{C}\times \mathbb{C}^I$.
\end{remark}

Now, since $o_\ell\leq w_{\mbox{\tiny min}}$, the vector $\ell_\infty:=(\ell_{\infty,0},\ell_{\infty,1},\ldots,\ell_{\infty,n})$ is of the form
\begin{equation}\label{liml1}
\left\{
\begin{aligned}
&(\ell_{\infty,0},\ell_{\infty,1},\ldots,\ell_{\infty,n_I},0,\ldots,0) \mbox{ if } o_\ell<w_{\mbox{\tiny min}};\\
&(\ell_{\infty,0},\ell_{\infty,1},\ldots,\ell_{\infty,n_I},a_{n_I+1},\ldots,a_{n_I+m_I}, 0,\ldots,0)\mbox{ if } o_\ell=w_{\mbox{\tiny min}}.
\end{aligned}
\right.
\end{equation}
It follows that if $o_\ell<w_{\mbox{\tiny min}}$, then 
\begin{equation*}
\omega_k(\ell_\infty)=\underbrace{\omega_{k,0}}_{0}\ell_{\infty,0} +\sum_{i=1}^{n_I}\underbrace{\omega_{k,i}}_{0}\ell_{\infty,i} + \sum_{i=n_I+1}^n \omega_{k,i}\underbrace{\ell_{\infty,i}}_{0}=0.
\end{equation*}
If $o_\ell=w_{\mbox{\tiny min}}$, then the proof divides into two cases:
\begin{enumerate}
\item
$\nu_k< d-w_{\mbox{\tiny min}}$;
\item
$\nu_k = d-w_{\mbox{\tiny min}}$.
\end{enumerate}
In the first case, by \eqref{lir}, we must have $\omega_{k,i}=0$ not only for $0\leq i\leq n_I$ but also for $n_I+1\leq i\leq n_I+m_I$, and hence
\begin{equation*}
\omega_k(\ell_\infty)=\underbrace{\omega_{k,0}}_{0}\ell_{\infty,0} +
\sum_{i=1}^{n_I+m_I}\underbrace{\omega_{k,i}}_{0}\ell_{\infty,i} + 
\sum_{i=n_I+m_I+1}^n \omega_{k,i}\underbrace{\ell_{\infty,i}}_{0}=0,
\end{equation*}
as desired. In the second case, we still have $\omega_{k,i}=0$ for $0\leq i\leq n_I$ but we do not know whether $\omega_{k,i}=0$ for $n_I+1\leq i\leq n_I+m_I$. Thus, since $\ell_{\infty,i}=0$ for $n_I+m_I+1\leq i\leq n$, we have
\begin{equation*}
\omega_k(\ell_\infty)=\sum_{i=n_I+1}^{n_I+m_I}\omega_{k,i} \, \ell_{\infty,i}
=\sum_{i=n_I+1}^{n_I+m_I}\omega_{k,i} \, a_i.
\end{equation*}
Since $\rho(s)\in \overline{S^J(L)}$, we have $f^l\circ\rho(s)=0$, and hence $\frac{d(f^l\circ\rho)}{ds}(s)=0$, for any $s$ and any $l\in L$. It follows that 
\begin{align*}
\bigg(df^k(\rho(s))+\sum_{l=1}^{k-1}c_{k,l}(s)\, df^{l}(\rho(s))\bigg)
\bigg( \frac{d\rho}{ds}(s) \bigg)=
(\omega_k\, s^{\nu_k}+\cdots) \bigg( \frac{d\rho}{ds}(s) \bigg)=0
\end{align*}
for any $s$.
Then, by taking the coefficient of the term with the lowest degree (i.e., the coefficient of $s^{d-1}$), we obtain
\begin{equation}\label{egaliteazero}
\sum_{i=n_I+1}^n \bigg( \frac{\partial f^k_{\tau,\mathbf{w}}}{\partial z_i}(\mathbf{a}) + \sum_{l=1}^{k-1}c_{k,l}(0)\, \frac{\partial f^l_{\tau,\mathbf{w}}}{\partial z_i}(\mathbf{a}) \bigg) a_i w_i=0.
\end{equation}
But, by \eqref{lir} again, we have:
\begin{equation*}
\left\{
\begin{aligned}
& \omega_{k,i}=\frac{\partial {f^{k}_{\tau,\mathbf{w}}}}{\partial z_i} (\mathbf{a})+\sum_{l=1}^{k-1} c_{k,l}(0)\, \frac{\partial {f^l_{\tau,\mathbf{w}}}}{\partial z_i} (\mathbf{a}) \mbox{ for } n_I+1\leq i\leq n_I+m_I;\\
& \frac{\partial {f^{k}_{\tau,\mathbf{w}}}}{\partial z_i} (\mathbf{a})+
\sum_{l=1}^{k-1} c_{k,l}(0)\, \frac{\partial {f^l_{\tau,\mathbf{w}}}}{\partial z_i} (\mathbf{a})=0 \mbox{ for } n_I+m_I+1\leq i\leq n.
\end{aligned}
\right.
\end{equation*}
Thus \eqref{egaliteazero} is simply written as
\begin{align*}
0 = \sum_{i=n_I+1}^{n_I+m_I}\omega_{k,i}a_iw_i
= w_{\mbox{\tiny min}}\sum_{i=n_I+1}^{n_I+m_I} \omega_{k,i}\, a_i
= w_{\mbox{\tiny min}}\, \omega_k(\ell_\infty).
\end{align*}
Since $w_{\mbox{\tiny min}}>0$, again we have $\omega_k(\ell_\infty)=0$.

\subsection{Proof of the equality $\omega_k(\ell_\infty)=0$ for $k_{L'}+1\leq k\leq k_{L}$}\label{77}
Since $\rho(s)\in\overline{S^J(L)}$ and $\rho'(s)\in S^I(K)$, we have $f^k(\rho(s))=f^k(\rho'(s))=0$ for all $s$. Then, by the Taylor formula applied to the function $f^k$ at the point $\rho(s)$, there is a constant $c\in (0,1)$ such that for any $s\not=0$ small enough,
\begin{equation*}
0=\frac{f^k(\rho'(s))-f^k(\rho(s))}{\vert s\vert^{o_\ell}} = 
\sum_{i=0}^n \frac{\partial f^k}{\partial z_i}(\rho(s)) \, 
\frac{z'_i(s)-z_i(s)}{\vert s\vert^{o_\ell}} + \frac{1}{\vert s\vert^{o_\ell}}\, R(s),
\end{equation*}
where
\begin{equation*}
\frac{1}{\vert s\vert^{o_\ell}}\, R(s) \leq \frac{c}{\vert s\vert^{o_\ell}}\, \Vert \rho'(s)-\rho(s) \Vert^2 = \frac{c}{\vert s\vert^{o_\ell}}\, \Vert \ell(s) \Vert^2
\end{equation*}
tends to $0$ as $s\to 0$. (Here, to simplify, we have written $z_0(s)$ and $z_0'(s)$ instead of $t(s)$ and $t'(s)$.) It follows that
\begin{equation*}
df^k(\rho(s))\bigg( \frac{\ell(s)}{\vert s\vert^{o_\ell}} \bigg)= 
\sum_{i=0}^n \frac{\partial f^k}{\partial z_i}(\rho(s)) \, 
\bigg( \frac{\ell_i(s)}{\vert s\vert^{o_\ell}} \bigg)
\end{equation*}
tends to $0$ as $s\to 0$, that is, $\omega_k(\ell_\infty)=0$.

\subsection{Conclusion}\label{conclusion}
We have proved Theorem \ref{mt} if $I\not=\emptyset$. 
Actually, the argument developed above still works if $I=\emptyset$ and is even simpler. Indeed, in this case, \S\S \ref{proof-para2} and \ref{77} are no longer relevant, and since $\mathbf{w}$ is a positive weight vector, Lemma \ref{lemma-fundlil} just follows from the non-degeneracy of $V(f_t^1,\ldots,f_t^{k_L})$ and the constancy of the Newton diagram; the uniform local tameness is no longer needed.

\begin{remark}
A straightforward modification of the proof shows that Theorem \ref{mt} extends to multi-parameter deformation families. 
\end{remark}

\section{Direct proof of Theorem \ref{mt2}}\label{sect-proofmt2}

We have seen in Section \ref{sect-tafc} that Theorem \ref{mt2} can be obtained by combining Theorem \ref{mt}\textemdash which asserts that $\mathcal{S}$ is Whitney $(b)$-regular\textemdash with a theorem of Parusi\'nski \cite{P} and Brian\c con--Maisonobe--Merle \cite{BMM}\textemdash which says that Whitney $(b)$-regular stratifications always satisfy so-called $w_f$ condition, and hence, Thom's $a_f$ condition. 
Theorem \ref{mt2} can also be proved by a direct explicit calculation using only elementary methods. Hereafter we outline this argument.

By an appropriate version of \cite[Proposition (2.2)]{O}, to show that $\mathcal{S}$ satisfies Thom's $a_f$ condition, it suffices to prove that for any $I\subseteq\{1,\ldots,n\}$, any $K\subseteq K_0$, any $(\tau,\mathbf{q})=(\tau,q_1,\ldots,q_n)\in S^I(K)$ close enough to the origin, and any real analytic path $\rho(s)=(t(s),\mathbf{z}(s))$ with $\rho(0)=(\tau,\mathbf{q})$ and $\rho(s)\notin V(f)$ for $s\not=0$, the following condition on the tangent spaces holds true:
\begin{equation}\label{th2-rmet}
\lim_{s\to 0} T_{\rho(s)} V(f-f(\rho(s)))\supseteq T_{(\tau,\mathbf{q})}S^I(K).
\end{equation}

To simplify, as in the proof of Theorem \ref{mt}, we shall assume $\emptyset\not=I=\{1,\ldots,n_I\}$. Then we divide the argument into two cases. First, we consider the case where $K=K_0$, $I\notin\mathcal{V}_{f^k_t}$ for $1\leq k\leq k_0'\leq k_0$ and $I\in\mathcal{V}_{f^k_t}$ for $k_0'+1\leq k\leq k_0$. Then, without loss of generality, we may suppose that 
\begin{equation*}
\left\{
\begin{aligned}
& f^{k}_{\tau,\mathbf{w}}(\mathbf{a})=0  \mbox{ for }  k'_0+1\leq k\leq k_0''\leq k_0,\\
& f^{k}_{\tau,\mathbf{w}}(\mathbf{a})\not=0  \mbox{ for }  k''_0+1\leq k\leq k_0,
\end{aligned} 
\right.
\end{equation*} 
where, as above, $\mathbf{a}:=(a_1,\ldots,a_n)$ and $\mathbf{w}:=(w_1,\ldots,w_n)$ are defined by the Taylor expansions $z_i(s)=a_i s^{w_i}+\cdots$ ($1\leq i\leq n$). Put
$h:=f^{k_0''+1}\cdots f^{k_0}$ and decompose $f$ as
\begin{equation*}
f=(f^1\cdots f^{k'_0})\cdot (f^{k_0'+1}\cdots f^{k''_0})\cdot h.
\end{equation*}
By the proof of Proposition 3.2 in \cite{O33}, for any $s\not=0$, the tangent space $T_{\rho(s)} V(f-f(\rho(s)))$ contains the intersection
\begin{equation*}
\bigg(\bigcap_{k=1}^{k'_0} T_{\rho(s)} V(f^k-f^k(\rho(s)))\bigg) 
\cap T_{\rho(s)} V(f^{k_0'+1}\cdots f^{k''_0}\cdot h-f^{k_0'+1}\cdots f^{k''_0}\cdot h(\rho(s))).
\end{equation*}
By the non-degeneracy condition, $V^{*I}(f^k)$ is non-singular for any $1\leq k\leq k_0'$ (see \cite[Chap.~V, Lemma (2.8.2)]{O6}), and since $V^{*\{1,\ldots,n\}}(f^k)=V(f^k)\cap (\mathbb{C}\times \mathbb{C}^{*n})\ni (\tau,\mathbf{q})$ is a non-singular open subset of $V(f^k)$, the point $(\tau,\mathbf{q})$ is a non-singular point of $V(f^k)$. Thus, for any $1\leq k\leq k_0'$, we have
\begin{equation}\label{firstpart}
\lim_{s\to 0} T_{\rho(s)} V(f^k-f^k(\rho(s)))=T_{(\tau,\mathbf{q})} V(f^k) \supseteq T_{(\tau,\mathbf{q})} V^{*I}(f^k).
\end{equation}

The uniform local tameness of the family $\{V(f_t^{k_0'+1},\ldots,f_t^{k''_0})\}_t$ says that if $(\tau,\mathbf{q})$ is close enough to the origin, then 
\begin{equation*}
W:=V(f^{k'_0+1}_{\tau,\mathbf{w}},\ldots, f^{k''_0}_{\tau,\mathbf{w}})
\cap \mathbb{C}^{*n}(q_1,\ldots,q_{n_I})
\end{equation*} 
is a reduced, non-singular, complete intersection variety in $\mathbb{C}^{*n}(q_1,\ldots,q_{n_I})$. In particular, since $\mathbf{a}$ lies in $W$,
\begin{equation*}
df_{\tau,\mathbf{w}}^{k_0'+1}(\mathbf{a})\wedge \cdots \wedge 
df_{\tau,\mathbf{w}}^{k''_0}(\mathbf{a})\not=0.
\end{equation*}  
Now, since $h_{\tau,\mathbf{w}}=f_{\tau,\mathbf{w}}^{k_0''+1}\cdots f_{\tau,\mathbf{w}}^{k_0}$ is weighted homogeneous and $W$ is invariant under the associated $\mathbb{C}^*$-action on $\mathbb{C}^{*n}(q_1,\ldots,q_{n_I})$, the restriction of $h_{\tau,\mathbf{w}}$ to $W$ may only have zero as critical value. Since $h_{\tau,\mathbf{w}}(\mathbf{a})\not=0$, it follows that $dh_{\tau,\mathbf{w}}(\mathbf{a})\not\equiv 0$, and hence 
\begin{equation}\label{rajout}
dh_{\tau,\mathbf{w}}(\mathbf{a}),df_{\tau,\mathbf{w}}^{k_0'+1}(\mathbf{a}),\ldots, df_{\tau,\mathbf{w}}^{k''_0}(\mathbf{a})
\end{equation}
are linearly independent. Then, by an argument similar to that given in \S \ref{proof-para1}, the normalized limit of
\begin{equation*}
df^{k_0'+1}(\rho(s)) \wedge \cdots \wedge df^{k''_0}(\rho(s)) \wedge dh(\rho(s))
\end{equation*}
as $s\to 0$ is a decomposable form in $\mathbb{C}^{*n}(q_1,\ldots,q_{n_I})$, and
\begin{equation}\label{thirdpart}
\lim_{s\to 0} T_{\rho(s)}V(f^{k_0'+1}\cdots f^{k_0''}\cdot h-f^{k_0'+1}\cdots f^{k_0''}\cdot h(\rho(s))) \supseteq \mathbb{C}\times \mathbb{C}^I.
\end{equation}

Combining \eqref{firstpart} and \eqref{thirdpart} show that
\begin{align*}
\lim_{s\to 0}T_{\rho(s)} V(f-f(\rho(s))) 
\supseteq \bigg(\bigcap_{1\leq k\leq k_0'} T_{(\tau,\mathbf{q})} V^{*I}(f^k)\bigg) 
\cap (\mathbb{C}\times \mathbb{C}^{I}) 
\supseteq T_{(\tau,\mathbf{q})} S^I(K).
\end{align*}

Now, we look at the case where $K\varsubsetneq K_0$. To simplify, we assume $K=\{1,\ldots,k_K\}$, $I\notin\mathcal{V}_{f^k_t}$ for $1\leq k\leq k'_K\leq k_K$ (this range is empty if $I\in\mathcal{V}_{f^k_t}$ for all $k\in K$) and for $k_K+1\leq k\leq k_0$, and $I\in\mathcal{V}_{f^k_t}$ for all the other values of $k$. (Note that for $k_K+1\leq k\leq k_0$, the relation $I\notin\mathcal{V}_{f^k_t}$ is always true by definition of $S^I(K)$.) Then we have the following properties.

\begin{enumerate}
\item
For $1\leq k\leq k'_K$ (again this range may be empty), the non-degeneracy condition shows that $V^{*I}(f^k)$ is non-singular and $(\tau,\mathbf{q})$ is a non-singular point of $V(f^k)$. Thus, for all such $k$'s, we have:
\begin{equation*}
\lim_{s\to 0} T_{\rho(s)} V(f^k-f^k(\rho(s)))=T_{(\tau,\mathbf{q})} V(f^k) \supseteq T_{(\tau,\mathbf{q})} V^{*I}(f^k).
\end{equation*}
\item
Assume that $k'_K\geq 1$.
Since $(\tau,\mathbf{q})\in S^I(K)$, we have $f^{k_K'}(\tau,\mathbf{q})=0$ while $f^{k}(\tau,\mathbf{q})\not=0$ for $k_K+1\leq k\leq k_0$. By the non-degeneracy condition, $V^{*I}(f^{k_K'})$ is non-singular and $(\tau,\mathbf{q})$ is a non-singular point of $V(f^{k'_K})$, and hence, a non-singular point of $V(g)$, where $g:=f^{k'_K} f^{k_K+1}\cdots f^{k_0}$. Thus,
\begin{equation*}
\lim_{s\to 0} T_{\rho(s)} V(g-g(\rho(s))) = T_{(\tau,\mathbf{q})} V(g) = T_{(\tau,\mathbf{q})} V(f^{k_K'}) \supseteq T_{(\tau,\mathbf{q})} V^{*I}(f^{k_K'}).
\end{equation*}
\item
For $k'_K+1\leq k\leq k_K$, let us assume (to simplify) that 
\begin{equation*}
\left\{
\begin{aligned}
& f^{k}_{\tau,\mathbf{w}}(\mathbf{a})=0  \mbox{ for }  k'_K+1\leq k\leq k_K''\leq k_K,\\
& f^{k}_{\tau,\mathbf{w}}(\mathbf{a})\not=0  \mbox{ for }  k''_K+1\leq k\leq k_K.
\end{aligned} 
\right.
\end{equation*} 
Then, as in \eqref{rajout} and \eqref{thirdpart}, writing $u:=f^{k_K''+1}\cdots f^{k_K}$ and using the uniform local tameness condition, we show that 
\begin{equation*}
du_{\tau,\mathbf{w}}(\mathbf{a}), df^{k'_K+1}_{\tau,\mathbf{w}}(\mathbf{a}),\ldots,
df^{k''_K}_{\tau,\mathbf{w}}(\mathbf{a})
\end{equation*}
are linearly independent, and hence
\begin{equation*}
\lim_{s\to 0} T_{\rho(s)} V(f^{k_K'+1}\cdots f^{k''_K}\cdot u-f^{k_K'+1}\cdots f^{k''_K}\cdot u(\rho(s)))\supseteq \mathbb{C}\times \mathbb{C}^{I}.
\end{equation*}
\end{enumerate}

Combined with the proof of Proposition 3.2 in \cite{O33} again, properties (1)--(3) show that if $k'_K\geq 1$, then
\begin{align*}
\lim_{s\to 0}T_{\rho(s)} V(f-f(\rho(s))) 
\supseteq T_{(\tau,\mathbf{q})} S^I(K).
\end{align*}

To complete the proof, it remains to consider the case where $I\in\mathcal{V}_{f^k}$ for all $1\leq k\leq k_K$ (i.e., by abuse of language, the case where $k'_K=0$). By reordering, we may assume that 
\begin{align*}
\mbox{ord}\, f^{k_K}(\rho(s))=\mbox{min}\{\mbox{ord}\, f^k(\rho(s))\mid 1\leq k\leq k_K\}.
\end{align*}
Put $p:=f^{k_K}\cdot f^{k_K+1}\cdots f^{k_0}$. Since
\begin{align*}
p_{\tau,\mathbf{w}}=f^{k_K}_{\tau,\mathbf{w}}\cdot\prod_{k=k_K+1}^{k_0}({f^k_\tau}\vert_{\mathbb{C}^I})
\end{align*}
(see the proof of Lemma \ref{lemma-fundlil2}), it follows from the uniform local tameness condition that if $(\tau,\mathbf{q})$ is close enough to the origin, then
\begin{align*}
V(f^{1}_{\tau,\mathbf{w}},\ldots,f^{k_K-1}_{\tau,\mathbf{w}},p_{\tau,\mathbf{w}}) \cap
\mathbb{C}^{*n}(q_1,\ldots,q_{n_I})
\end{align*}
is a reduced, non-singular, complete intersection variety in $\mathbb{C}^{*n}(q_1,\ldots,q_{n_I})$. Then, by an argument similar to that given in \S \ref{proof-para1} again, we show that the normalized limit of
\begin{align*}
df^1(\rho(s))\wedge\cdots\wedge df^{k_K-1}(\rho(s))\wedge dp(\rho(s))
\end{align*}
 as $s\to 0$ is decomposable in $\mathbb{C}^{*n}(q_1,\ldots,q_{n_I})$, and 
\begin{align*}
\lim_{s\to 0} T_{\rho(s)} V(f-f(\rho(s))) \supseteq \mathbb{C}\times\mathbb{C}^I = T_{(\tau,\mathbf{q})} S^I(K).
\end{align*}

\section{Proof of Proposition \ref{proposition-us}}\label{proofofproposition-us}

We argue by contradiction. If the assertion fails, then (using the curve selection lemma) there is a real analytic path $\rho(s)=(t(s),\mathbf{z}(s))$ such that $\rho(0)=(0,\mathbf{0})$ and $df(\rho(s))\equiv 0$. We may assume that $\mathbf{z}(s)\in\mathbb{C}^{*n}$ for $s\not=0$. Consider the Taylor expansions 
\begin{equation*}
t(s)=t_0s^{w_0}+\cdots
\quad\mbox{and}\quad
z_i(s)=a_is^{w_i}+\cdots \ (1\leq i\leq n),
\end{equation*}
where $t_0, a_i\not=0$ and $w_0, w_i>0$, and set $\mathbf{a}:=(a_1,\ldots,a_n)$, $\mathbf{w}:=(w_1,\ldots,w_n)$ and $\hat{\mathbf{w}}:=(w_0,w_1,\ldots,w_n)$. Note that $d(\hat{\mathbf{w}};f^k)=d(\mathbf{w};f^k_0)$, and since $w_0>0$, $f^k_{\hat{\mathbf{w}}}(t,\mathbf{z})=f^k_{0,\mathbf{w}}(\mathbf{z})$ for all $1\leq k\leq k_0$. For simplicity, we assume that 
\begin{equation*}
\left\{
\begin{aligned}
& f^{k}_{0,\mathbf{w}}(\mathbf{a})=0  \mbox{ for }  1\leq k\leq k_0'\leq k_0,\\
& f^{k}_{0,\mathbf{w}}(\mathbf{a})\not=0  \mbox{ for }  k'_0+1\leq k\leq k_0.
\end{aligned} 
\right.
\end{equation*}
Let $d_k:=d(\mathbf{w};f^k_0)$. Then we have
\begin{equation*}
f^k (\rho(s)) = f^k_{0,\mathbf{w}}(\mathbf{a})\, s^{d_k}+\cdots 
\quad\mbox{and}\quad
\frac{\partial f^k}{\partial z_i} (\rho(s)) = \frac{\partial {f^k_{0,\mathbf{w}}}}{\partial z_i} (\mathbf{a})\,  s^{d_k-w_i}+\cdots.
\end{equation*}
Suppose that $k_0'\leq k_0-1$ and write $f=f^1\cdots f^{k_0'}\cdot h$, where $h:=f^{k_0'+1}\cdots f^{k_0}$. Then we have
\begin{align*}
\frac{\partial f}{\partial z_i}(\rho(s)) = \sum_{k=1}^{k_0'}
\bigg( \frac{\partial f^k}{\partial z_i}(\rho(s))\cdot h(\rho(s))\cdot\prod_{{1\leq k'\leq k_0'}\atop{k'\not=k}} f^{k'}(\rho(s))\bigg)+
\frac{\partial h}{\partial z_i}(\rho(s))\cdot\prod_{k=1}^{k_0'} f^{k}(\rho(s)).
\end{align*}
To simplify, put
\begin{equation*}
A_i^k(s):=\bigg( \frac{\partial f^k}{\partial z_i}(\rho(s))\cdot h(\rho(s))\cdot\prod_{{1\leq k'\leq k_0'}\atop{k'\not=k}} f^{k'}(\rho(s))\bigg)
\quad\mbox{and}\quad
B_i(s):=\frac{\partial h}{\partial z_i}(\rho(s))\cdot\prod_{k=1}^{k_0'} f^{k}(\rho(s)).
\end{equation*}
If $d'_k$ denotes the order (in $s$) of $f^k(\rho(s))$, then
\begin{equation}\label{pp42e1}
\mbox{ord}\, A_i^k(s)\geq
d_k-w_i+\sum_{k'=1}^{k_0'} d'_{k'}-d'_k+d(\mathbf{w};h_0).
\end{equation}
(Note that, as above, $d(\hat{\mathbf{w}};h)=d(\mathbf{w};h_0)$ and $h_{\hat{\mathbf{w}}}(t,\mathbf{z})=h_{0,\mathbf{w}}(\mathbf{z})$.)
Let $e_k:=d_k+\sum_{k'=1}^{k_0'} d'_{k'}-d'_k$, and for simplicity assume that
\begin{equation*}
e_{\mbox{\tiny min}}:=e_1=\cdots=e_{k_0''}<e_{k_0''+1}\leq \cdots\leq e_{k_0'}.
\end{equation*}
Then, \eqref{pp42e1} is written as $\mbox{ord}\, A_i^k(s)\geq
d(\mathbf{w};h_0)-w_i+e_k$, and since $d'_k>d_k$, we also have
\begin{equation}\label{pp42e2}
\mbox{ord}\, B_i(s) \geq d(\mathbf{w};h_0)-w_i+\sum_{k'=1}^{k_0'} d'_{k'} > 
d(\mathbf{w};h_0)-w_i+e_k.
\end{equation}
Now, since $\mathbf{a}\in V^*(f_{0,\mathbf{w}}^1,\ldots,f_{0,\mathbf{w}}^{k_0''})$ and $V(f_0^1,\ldots,f_0^{k_0''})$ is a non-degenerate complete intersection variety, we must have
\begin{equation}\label{proof-prop-iac}
df^1_{0,\mathbf{w}}(\mathbf{a})\wedge\cdots\wedge df^{k_0''}_{0,\mathbf{w}}(\mathbf{a})\not=0,
\end{equation}
and therefore there exist $i_1,\ldots,i_{k''_0}$ such that the determinant of the matrix
\begin{equation}\label{proof-smooth-matrix}
\bigg( \frac{\partial f^k_{0,\mathbf{w}}}{\partial z_{i_j}}(\mathbf{a}) \bigg)_{1\leq j,k\leq k''_0}
\end{equation}
is non-zero. In particular, for all $i_j\in \{i_1,\ldots,i_{k''_0}\}$ there is at least an index $k_{i_j}$ such that 
\begin{equation}\label{proof-smooth-dernn}
\frac{\partial {f^{k_{i_j}}_{0,\mathbf{w}}}}{\partial z_{i_j}} (\mathbf{a})\not=0.
\end{equation}
Let $m:=\mbox{min}\{d(\mathbf{w};h_0)-w_{i_j}+e_{\mbox{\tiny min}}\mid 1\leq j\leq k''_0\}$. Then we easily check that there exist non-zero complex numbers $c_1,\ldots,c_{k''_0}$ such that for any $i_j\in \{i_1,\ldots,i_{k''_0}\}$ we have
\begin{equation*}
\frac{\partial f}{\partial z_{i_j}}(\rho(s))=\sum_{k=1}^{k''_0}D_k\, s^m+\cdots,
\end{equation*}
where 
\begin{equation*}
D_k:=\left\{
\begin{aligned}
&\frac{\partial {f^k_{0,\mathbf{w}}}}{\partial z_{i_j}} (\mathbf{a})\cdot c_k && \mbox{if}
&& d(\mathbf{w};h_0)-w_{i_j}+e_{\mbox{\tiny min}} = m,\\
& 0 &&\mbox{if} && d(\mathbf{w};h_0)-w_{i_j}+e_{\mbox{\tiny min}} > m.
\end{aligned}
\right.
\end{equation*}
Using \eqref{proof-smooth-dernn}, it is easy to see that there is at least an index $i_{j_0}\in \{i_1,\ldots,i_{k''_0}\}$ and a non-empty subset $E\subseteq \{1,\ldots,k''_0\}$ such that
\begin{equation}\label{proof-smooth-rel-ld}
\frac{\partial f}{\partial z_{i_{j_0}}}(\rho(s))=\sum_{k\in E} \frac{\partial {f^k_{0,\mathbf{w}}}}{\partial z_{i_{j_0}}} (\mathbf{a})\cdot c_k\cdot s^m+\cdots,
\end{equation}
with $\frac{\partial {f^k_{0,\mathbf{w}}}}{\partial z_{i_{j_0}}} (\mathbf{a})\not=0$.
So, if $df(\rho(s))\equiv 0$, then the relation \eqref{proof-smooth-rel-ld} vanishes and gives a linear relation between the elements of the set
\begin{equation*}
\bigg\{\frac{\partial {f^k_{0,\mathbf{w}}}}{\partial z_{i_{j_0}}} (\mathbf{a})\bigg\} 
_{k\in E}.
\end{equation*} 
This implies that the determinant of the matrix \eqref{proof-smooth-dernn} vanishes\textemdash a contradiction to the Newton-admissibility condition.

The case $k'_0=k_0$ can be proved exactly in the same way. If $k'_0=0$ (i.e., if $f^k_{0,\mathbf{w}}(\mathbf{a})\not=0$ for $1\leq k\leq k_0$), then $f_{\hat{\mathbf{w}}}(t,\mathbf{z})=h_{\hat{\mathbf{w}}}(t,\mathbf{z})$ is a weighted homogeneous polynomial and the assertion is obvious in this case.

\begin{remark}
In the above proof, we have only used the non-degeneracy condition. The uniform local tameness assumption is not needed.
\end{remark}

\section{Proof of Proposition \ref{lemma-fpofmt3}}\label{proofprop-utr}

The argument is similar to that given in the proof of Corollary (2.8.1) in \cite{O6}. For completeness and convenience of the reader, we briefly recall and adapt it to our setting. We argue by contradiction. Choose a neighbourhood $\mathring{D}_\tau\times \mathring{B}_r$ of the origin of $\mathbb{C}\times\mathbb{C}^n$ such that Theorem \ref{mt} and Corollary \ref{cormt} hold, and suppose there exist a sequence $\{(t_m,\mathbf{z}_m)\}_m$ converging to $(t_0,\mathbf{0})\in \mathring{D}_\tau\times \mathring{B}_r$ and a stratum $S:=S^I(K)\in\mathcal{S}$ such that $\{t_m\}\times \mathbb{S}_{\Vert \mathbf{z}_m \Vert}$ is not transverse to $(\{t_m\}\times\mathbb{C}^n)\cap S$ at $(t_m,\mathbf{z}_m)$, that is,
\begin{equation}\label{proof-lemma-thmmt3-1}
T_{(t_m,\mathbf{z}_m)}((\{t_m\}\times\mathbb{C}^n)\cap S) \subseteq 
T_{(t_m,\mathbf{z}_m)}(\{t_m\}\times \mathbb{S}_{\Vert \mathbf{z}_m \Vert})\subseteq 
\{t_m\}\times \mathbf{z}_m^\bot,
\end{equation}
where $\mathbf{z}_m^\bot$ denotes the real orthogonal complement of $\mathbf{z}_m$.
By compactness, taking a subsequence if necessary, we may assume that the line defined by $\mathbf{z}_m$ and the origin in $\mathbb{C}^n$ converges to some $\ell$ in $\mathbb{P}^{n-1}$ and the following limits exist (in the appropriate Grassmannians):
\begin{align*}
& \cdot \ T:=\lim_{m\to\infty}T_{(t_m,\mathbf{z}_m)}((\{t_m\}\times\mathbb{C}^n)\cap S);\\
& \cdot \ T':=\lim_{m\to\infty}T_{(t_m,\mathbf{z}_m)} S.
\end{align*}
By \eqref{proof-lemma-thmmt3-1}, $T\subseteq \ell^\bot$. In particular, $\ell\not\subseteq T$. Now, by the Whitney $(a)$-regularity condition, we must have 
\begin{equation}\label{proof-lemma-thmmt3-2}
T'\supseteq T_{(t_0,\mathbf{0})}(\mathbb{C}\times \{\mathbf{0}\}),
\end{equation}
and since the line defined by $(t_m,\mathbf{z}_m)$ and $(t_m,\mathbf{0})$ in $\mathbb{C}\times\mathbb{C}^n$ converges to $\ell$ in $\mathbb{P}^n$, the Whitney $(b)$-regularity condition implies
\begin{equation}\label{proof-lemma-thmmt3-3}
\ell\subseteq T'.
\end{equation}
Combined with the relation $T'\supseteq T$, the inclusions \eqref{proof-lemma-thmmt3-2} and \eqref{proof-lemma-thmmt3-3} show that $T'$ contains the subspace $M$ generated by $\ell$, $T_{(t_0,\mathbf{0})}(\mathbb{C}\times \{\mathbf{0}\})$ and $T$. Therefore,
\begin{equation*}
n+1-k=\dim T'\geq \dim M = 1+1+(n-k)
\end{equation*}
($k$ is the number of defining equations of the stratum $S:=S^I(K)$), which is a contradiction.

\section{Proof of Theorem \ref{mt3}}\label{proofmt3}

Again, we argue by contradiction. Choose $\tau,r>0$ so that Theorem \ref{mt2}, Propositions \ref{proposition-us}, \ref{lemma-fpofmt3} and Corollary \ref{cor-mt2} hold, and suppose that the family $\{f_t\}$ does not have a uniform stable radius. Then, for all $0<\tau'\leq\tau$ and all $0<r'\leq r$, there exist $0<\varepsilon'\leq\varepsilon\leq r'$ such that for all sufficiently small $\delta>0$ there exist $\eta_\delta$, $\varepsilon_\delta$ and $t_\delta$, with $0<\vert\eta_\delta\vert\leq\delta$, $\varepsilon'\leq\varepsilon_\delta\leq\varepsilon$ and $\vert t_\delta\vert\leq\tau'$, such that $V(f_{t_\delta}-\eta_\delta)$ is non-singular in $\mathring{B}_{r'}$ and does not transversely intersect the sphere $\mathbb{S}_{\varepsilon_\delta}$ at some point $\mathbf{z}_\delta$, so that
\begin{equation}\label{pmth3-itss}
T_{\mathbf{z}_\delta} V(f_{t_\delta}-\eta_\delta) \subseteq 
T_{\mathbf{z}_\delta} \mathbb{S}_{\varepsilon_\delta} = 
T_{\mathbf{z}_\delta} \mathbb{S}_{\Vert {\mathbf{z}_\delta} \Vert}.
\end{equation}
Take $\delta:=\delta_m:=1/m$ and consider the corresponding sequence $\{(t_{\delta_m},\mathbf{z}_{\delta_m})\}_m$  in $D_{\tau'}\times (B_\varepsilon\setminus \mathring{B}_{\varepsilon'})$. By compactness, we may assume the following limits~exist:
\begin{align*}
& \cdot \ T:=\lim_{m\to \infty} T_{\mathbf{z}_{\delta_m}} 
V(f_{t_{\delta_m}}-\eta_{\delta_m});\\
& \cdot \ T':=\lim_{m\to \infty} T_{(t_{\delta_m},\mathbf{z}_{\delta_m})} 
V(f-\eta_{\delta_m});\\
& \cdot \ (t_0,\mathbf{z}_0):=\lim_{m\to \infty}(t_{\delta_m},\mathbf{z}_{\delta_m})
\in D_{\tau'}\times (B_\varepsilon\setminus \mathring{B}_{\varepsilon'}).
\end{align*}
Then, by \eqref{pmth3-itss}, we have:
\begin{equation}\label{pmth3-itss-2}
T\subseteq 
\lim_{m\to \infty} T_{\mathbf{z}_{\delta_m}} \mathbb{S}_{\Vert \mathbf{z}_{\delta_m} \Vert} = T_{\mathbf{z}_0} \mathbb{S}_{\Vert \mathbf{z}_0 \Vert}.
\end{equation}
Let $S_{00}$ be the stratum of $\mathcal{S}$ that contains $(t_0,\mathbf{z}_0)$, and let $S_0:=(\{t_0\}\times \mathbb{C}^n)\cap S_{00}$ be the stratum of $\mathcal{S}_{t_0}$ that contains $\mathbf{z}_0$ (in particular, $\{t_0\}\times \mathbb{C}^n$ and $S_{00}$ transversely intersect at $(t_0,\mathbf{z}_0)$). By Thom's $a_f$ condition, $T_{(t_0,\mathbf{z}_0)} S_{00}\subseteq T'$, and  hence,
\begin{equation}\label{pp52-its}
T_{(t_0,\mathbf{z}_0)} S_{0} \subseteq (\{t_0\}\times \mathbb{C}^n) \cap T'.
\end{equation}
This implies that $\{t_0\}\times \mathbb{C}^n$ and $T'$ transversely intersect at $(t_0,\mathbf{z}_0)$ (as otherwise $\{t_0\}\times \mathbb{C}^n$ and $S_{00}$ would not be transverse at $(t_0,\mathbf{z}_0)$ either). By taking the limit as $m\to\infty$ of the expression
\begin{align*}
\{t_{\delta_m}\}\times T_{\mathbf{z}_{\delta_m}}V(f_{t_{\delta_m}}-\eta_{\delta_m})&\equiv
T_{(t_{\delta_m},\mathbf{z}_{\delta_m})}(\{t_{\delta_m}\}\times V(f_{t_{\delta_m}}-\eta_{\delta_m}))\\
&=(\{t_{\delta_m}\}\times \mathbb{C}^n) \cap T_{(t_{\delta_m},\mathbf{z}_{\delta_m})}
V(f-\eta_{\delta_m}),
\end{align*}
the transversality of $\{t_0\}\times \mathbb{C}^n$ and $T'$ at $(t_0,\mathbf{z}_0)$ implies
\begin{equation*}
\{t_0\}\times T = (\{t_0\}\times \mathbb{C}^n) \cap T'.
\end{equation*}
Combined with \eqref{pmth3-itss-2} and \eqref{pp52-its}, this shows
\begin{equation*}
T_{\mathbf{z}_0} S_0\equiv T_{(t_0,\mathbf{z}_0)} S_0\overset{\eqref{pp52-its}}{\subseteq} (\{t_0\}\times \mathbb{C}^n) \cap T'=\{t_0\}\times T\overset{\eqref{pmth3-itss-2}}{\subseteq} \{t_0\}\times T_{\mathbf{z}_0} \mathbb{S}_{\Vert \mathbf{z}_0 \Vert} \equiv
T_{\mathbf{z}_0} \mathbb{S}_{\Vert \mathbf{z}_0 \Vert},
\end{equation*}
and therefore $S_0$ and $\mathbb{S}_{\Vert \mathbf{z}_0 \Vert}$ do not meet tranversely at $\mathbf{z}_0$, which contradicts Proposition \ref{lemma-fpofmt3}.

\bibliographystyle{amsplain}

\end{document}